\DeclareMathOperator{\diver}{div}
\def \RR {\mathbb{R}}
\def \NN {\mathbb{N}}
\def \tp{\tilde{p}}
\let \u \underline
\newtheorem{thm}{Theorem} \newtheorem{lem}{Lemma}
\newtheorem{df}[lem]{Definition} 
\newtheorem{cor}[lem]{Corollary}
\title{Fractional Laplacian with singular drift\footnotetext{The research was partially supported by ANR-09-BLAN-0084-01 and grants MNiSW N N201 397137 and N N201 422539. Fellowship co-financed by European Union within European Social Fund.}}
\author{Tomasz Jakubowski \\
\small
Institute of Mathematics and Computer Science\\ \small Wroc{\l}aw University of Technology\\ 
\small Wybrze\.ze Wyspia\'nskiego 27, 50-370 Wroc{\l}aw, Poland\\
\small e-mail: Tomasz.Jakubowski@pwr.wroc.pl }
\date{}
\begin{document}
\maketitle
\begin{abstract}
For $\alpha \in (1,2)$ we consider the equation $\partial_t u =  \Delta^{\alpha/2} u - r b \cdot \nabla u$, where $b$ is a divergence free singular vector field not necessarily belonging to the Kato class. We show that for sufficiently small $r>0$ the fundamental solution is globally in time comparable with the density of the isotropic stable process.
\end{abstract}
\begin{quote}
 {\em keywords: } fractional Laplacian, gradient perturbations.\\\\
{\em AMS Subject Classification\/:} 60J35, 47A55, 47D06
\end{quote}
\section{Introduction}

Let $d \ge 1$ be a natural number and $\alpha \in (1,2)$. We denote by $p(t,x)$ the density of the isotropic $\alpha$-stable L\'evy process, i.e.
\begin{equation}
	p(t,x) = (2\pi)^{-d} \int_{\RR^d} e^{-i x \cdot \xi} e^{-t |\xi|^{\alpha}}d\xi\,, \qquad t>0\,,\; x \in \RR^d\,.
\end{equation}

For $\varphi \in C^{\infty}_c(\RR^d)$ we define the operator
\begin{equation*}\label{p:alpha}
	\Delta^{\alpha/2} \phi(x) = \mathcal{A}_{d,\alpha} \lim_{\varepsilon \to 0^+}  \int_{|y|>\varepsilon} \frac{\phi(x+y)- \phi(x)}{|y|^{d+\alpha}}dy\,,
\end{equation*}
where $\mathcal{A}_{d,\alpha} >0$ is a constant depending only on $\alpha$ and $d$. $\Delta^{\alpha/2}$ is the infinitesimal generator of the isotropic $\alpha$-stable process with the time and space homogeneous transition density $p(t,x,y)= p(t,y-x)$, i.e.
$$
\Delta^{\alpha/2} \phi(x) = \lim_{t \to 0} \frac{1}{t} \int_{\RR^d} p(t,x,y)(\phi(y)-\phi(x))\,dy\,.
$$

Let $b(x)=(b_1(x),\ldots, b_d(x))$ be a vector field satisfying the following conditions
\begin{align}
&\sup_{x \in \RR^d}\int_{\RR^d} p(t,x,y) |b(y)| \, dy < C_b t^{1/\alpha -1},  \quad t \in (0,\infty), \label{cond:|b|}\\
&\diver b =0, \; (\mbox{\rm in the sense of distribution theory}), \label{cond:b}
\end{align}
for some constant $C_b>0$. We will study the equation
\begin{equation}\label{eq:main}
\partial_t u - \Delta^{\alpha/2} u + r b \cdot \nabla u = 0\,, \qquad x \in \RR^d,\, t>0\,.
\end{equation}
The condition (\ref{cond:|b|}) allows for functions $b$ not belonging to the usual Kato class $\mathcal{K}_d^{\alpha-1}$, see (\ref{eq:Kato} below). For example our results apply to $d=2$ and 
$$
b(y) = (y_2 |y|^{-\alpha}, -y_1 |y|^{-\alpha}).
$$
We note that $\diver b = 0$ in the sense of distributions,  $|b(y)| = |y|^{1-\alpha}$ and $b \not\in \mathcal{K}_d^{\alpha-1}$.
The main result of the paper is the following.

\begin{thm}\label{thm:main1}
There is a constant $R= R(\alpha, d, C_b)$ such that for all $ 0 \le r \le R$ there exists a function $\tp(t,x,y)$ such that
 for $\phi \in C_c^\infty(\RR,\RR^d)$, $s \in \RR$ and $x \in \RR^d$,
\begin{equation}\label{eq}
\int_s^\infty \int_{\RR^d} \tilde{p}(u-s,x,z) (\partial_u\phi(u,z) + \Delta_{z}^{\alpha/2}\phi(u,z) +r b(z) \cdot  \nabla_z \phi(u,z))dz du = -\phi(s,x)\,,
\end{equation}
and there is a constant $C>0$ depending only on $d,\alpha,b,R$ such that
\begin{equation}\label{eq:tp_est}
C^{-1} p(t,x,y) \le \tp(t,x,y) \le C p(t,x,y), \qquad t>0,\; x,y \in \RR^d\,.
\end{equation}
\end{thm}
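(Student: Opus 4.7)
The plan is to construct $\tp$ as a perturbation series around the unperturbed kernel $p$. Set $p_0(t,x,y):=p(t,x,y)$ and, inductively for $n\ge 1$,
\begin{equation*}
p_n(t,x,y) := -r \int_0^t\!\!\int_{\Rd} p(t-s,x,z)\, b(z) \cdot \nabla_z p_{n-1}(s,z,y)\,dz\,ds,
\end{equation*}
then define $\tp := \sum_{n=0}^\infty p_n$. Formally this is the Duhamel iteration for the operator $\partial_t - \Delta^{\alpha/2} + rb\cdot\nabla$, so once $|p_n|\le(Kr)^n p$ is established uniformly in $n$ for a constant $K=K(\alpha,d,C_b)$, taking $R$ with $KR<1/2$ will yield absolute convergence, the weak identity (\ref{eq}) by term-by-term differentiation and dominated convergence, and both sides of (\ref{eq:tp_est}) since $|\tp-p|\le (Kr/(1-Kr))\,p$.

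The principal obstacle is that the naive gradient bound $|\nabla_z p_{n-1}(s,\cdot,y)|\lesssim s^{-1/\alpha}p_{n-1}(s,\cdot,y)$ combined with (\ref{cond:|b|}) would produce a logarithmically divergent temporal integral: this is exactly the reason $b$ fails to lie in the fractional Kato class $\mathcal{K}_d^{\alpha-1}$. The divergence-free hypothesis (\ref{cond:b}) must therefore be used substantively. The natural way is to integrate by parts in $z$: since $\diver b=0$ distributionally,
\begin{equation*}
\int_{\Rd} p(t-s,x,z)\, b(z)\cdot\nabla_z p_{n-1}(s,z,y)\,dz = -\int_{\Rd} \nabla_z p(t-s,x,z)\cdot b(z)\, p_{n-1}(s,z,y)\,dz,
\end{equation*}
which transfers the singular gradient onto the explicit stable kernel, where one has the sharper pointwise bound $|\nabla_z p(t-s,x,z)| \le C\bigl((t-s)^{-1/\alpha}\wedge |x-z|^{-1}\bigr) p(t-s,x,z)$. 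Combining this refined gradient estimate with a 3P-type inequality for the stable density, the inductive hypothesis $p_{n-1}\le(Kr)^{n-1}p$, and (\ref{cond:|b|}) applied to both marginals (using the symmetry $p(u,z,y)=p(u,y,z)$), one reduces the inner $z$-integration to a scale-invariant temporal integral that closes the induction.

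The most delicate step is to legitimise the integration by parts on every level of the iteration. Since $p_n$ is defined only implicitly and $b$ has merely the integral regularity (\ref{cond:|b|}), the identity $b\cdot\nabla p_n=\diver(b\,p_n)$ has to be obtained jointly with enough regularity of $p_n$ (for instance a H\"older seminorm inherited from the explicit kernel $p$). The standard remedy is to mollify: approximate $b$ by smooth divergence-free vector fields $b_k\to b$ satisfying (\ref{cond:|b|}) with uniform constant $C_b$, construct $\tp_k$ for each $b_k$ with the uniform-in-$k$ series bound, and pass to the limit. The upper bound in (\ref{eq:tp_est}) provides the compactness, while dominated convergence transfers the weak identity (\ref{eq}) to the limit because the test function $\phi\in C_c^\infty$ localises every integral and the singularity of $b$ is absorbed via (\ref{cond:|b|}). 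Once $\tp$ has been constructed and the upper bound secured, the lower bound is automatic from the perturbative bound $|\tp-p|\le\tfrac{1}{2}\,p$ available for $R$ sufficiently small.
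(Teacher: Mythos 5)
Your proposal correctly identifies the perturbation‐series skeleton and the need to use $\diver b=0$ via integration by parts, but it misses the decisive structural idea and, as written, would not close. The critical gap is this: after you integrate by parts once to move the gradient from $p_{n-1}(s,\cdot,y)$ onto $p(t-s,x,\cdot)$ and then invoke the inductive bound $|p_{n-1}|\le(Kr)^{n-1}p$, the standard gradient estimate $|\nabla_z p(t-s,x,z)|\lesssim (t-s)^{-1/\alpha}p(t-s,x,z)$ together with the 3P inequality and (\ref{cond:|b|}) yields the majorant
\begin{equation*}
(Kr)^{n-1}p(t,x,y)\int_0^t (t-s)^{-1/\alpha}\bigl[(t-s)^{1/\alpha-1}+s^{1/\alpha-1}\bigr]\,ds,
\end{equation*}
and the first summand gives $\int_0^t (t-s)^{-1}\,ds=\infty$. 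In other words, moving the gradient does not remove the temporal divergence, it only relocates it from $s\to 0^+$ to $s\to t^-$. Your appeal to the refined pointwise bound $|\nabla_z p(u,x,z)|\le C(u^{-1/\alpha}\wedge|x-z|^{-1})p(u,x,z)$ does not obviously repair this: the troublesome contribution comes from the diagonal region $|x-z|\lesssim (t-s)^{1/\alpha}$, where the minimum is still the time factor, and you give no argument that the off-diagonal gain compensates. There is also a structural difficulty with your recursion: you place the gradient on $p_{n-1}$, which has no gradient estimate, so \emph{before} IBP the integrand is not controllable, and \emph{after} IBP the time integral diverges.

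What the paper actually does is finer. It writes the $n$-fold iterate explicitly as $P_n(t,x,y,\u{s},\u{z})$, takes the inner spatial integral first, only then the absolute value, and then integrates over the time simplex $S_n(0,t)$. The crucial step is the decomposition $S_n(0,t)=S_n(0,t/2)\cup\bigl(\bigcup_{k=1}^{n-1}S_{n-k}(0,t/2)\times S_k(t/2,t)\bigr)\cup S_n(t/2,t)$, followed by a \emph{region-dependent} choice of where to integrate by parts: on each piece the singular gradient is thrown onto a factor whose time argument is bounded below by $t/2$, so the resulting time integral is scale-invariant and finite (Lemmas \ref{lem:|p|_m_est}, \ref{lem:|p|_well_def}, \ref{lem:aux2}). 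This recursion naturally produces Motzkin numbers $M_n$ as combinatorial coefficients, and the bound $|p|_n\le M_nC^n p$ then converges for $rC<(\sqrt5-1)/4$ by the Motzkin generating function. Your uniform bound $|p_n|\le(Kr)^np$ is obtained only after this machinery; a single top-level IBP cannot produce it.

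Finally, the mollification of $b$ you introduce to justify the IBP is an unnecessary detour, and not obviously viable: the same temporal divergence would appear for each smooth $b_k$ since $b_k$ still satisfies only (\ref{cond:|b|}), not the Kato condition (\ref{eq:Kato}). The paper sidesteps this entirely by proving the IBP directly for products of explicit stable densities (Lemma \ref{lem:Move_grad} and (\ref{eq:move_grad})), using (\ref{cond:b}) with a cutoff argument and dominated convergence, so that IBP is never applied to the implicit functions $p_n$ but only to factors of the explicit kernel chain $P_n$.
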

\noindent 
According to (\ref{eq}), $\tilde{p}$ is 
the integral kernel of the left inverse of $-\big(\partial_t+\Delta^{\alpha/2}_z+ r b \cdot \nabla_z\big)$.
Put differently, since $\diver b = 0$, a function $f\colon (u,z)\mapsto \tilde{p}(u-s,x,z)$ solves 
$(\partial_t-\Delta^{\alpha/2}_z+ rb\cdot \nabla_z) f=\delta_{(s,x)}$ in the sense
of distributions. Thus, $\tilde{p}$ is the fundamental solution of (\ref{eq:main}). As a corollary we obtain that $\tp$ is the integral kernel of the Markov semigroup with the (weak) generator $\Delta^{\alpha/2} + b \cdot \nabla$ (Corollary \ref{cor:gen}). 

Equations similar to  (\ref{eq:main}) were widely studied for the Laplacian and more general elliptic operators (see e.g. \cite{1968-Aronson-asnsp}, \cite{Zhang1}, \cite{Zhang2}, \cite{2000-VL-YS-jlms}). The authors considered also drifts $b$ depending on time and satisfying various conditions similar to (\ref{eq:Kato}) and called the generalized Kato conditions . In general, the comparability of the fundamental solution with the Gaussian density holds only locally in time (i.e. with constants depending on time). To obtain global estimates, the additional assumption on divergence of $b$ is necessary. For example in \cite{1987-Osada-jmku} Osada proved that the fundamental solution of $\partial_t u =  A u + b \cdot \nabla u$  has upper and lower Gaussian bounds, where $A$ is a uniformly elliptic operator in divergence form, $b$ is the derivative of a bounded function and $\diver b=0$. These results were later obtained in \cite{2004-VL-QiZ-mm} for much more singular drifts (exceeding generalized Kato class) with some smallness assumption on divergence.

Additive perturbations of the fractional Laplacian were intensely studied in recent years (e.g. \cite{2007-TJ-spa}, \cite{2008-TJ-mz}, \cite{2007-KB-TJ-cmp}, \cite{2010-TJ-KS-jee}, \cite{2010-KB-TJ-preprint}, \cite{2010-ZQC-PK-RS-preprint}, \cite{2008-LB-GK-jee}, \cite{2009-NA-CI-tams}, \cite{2006-JD-CI-arma}).
In particular, the equation (\ref{eq:main}) was considered in \cite{2007-KB-TJ-cmp} for $b \in \mathcal{K}_d^{\alpha-1}$ but with no condition on divergence. Recall that $b \in \mathcal{K}_d^{\alpha-1}$ if
\begin{equation}\label{eq:Kato}
\lim_{t\to 0} \sup_{x \in \RR^d} \int_0^t \int_{\RR^d} s^{-1/\alpha} p(s,x,y) |b(y)|\,dy\,ds = 0
\end{equation}
(see also \cite{2011-TJ-KS-preprint} and \cite{2010-TJ-KS-jee}  for further developments). The authors obtained local in time comparability of $\tilde{p}$ and $p$ for each $r \in \RR$. The function $\tilde{p}$ was constructed as the perturbation series $\tp = \sum_{n=0}^\infty p_n$, where 
\begin{align}
p_0(t,x,y) &= p(t,x,y) \label{def:p_1},\\
p_n(t,x,y) &= \int_0^t \int_{\RR^d} p_{n-1}(t-s,x,z)b(z) \cdot \nabla_z p(s,z,y)dzds. \label{def:p_n}
\end{align}
The Kato conditions on $b$ assures local in time smallness of $p_1$ with respect to $p$, and in consequence the perturbation series converges. Similar methods was used to study Schr\"odinger perturbations of transition densities (\cite{2008-KB-WH-TJ-sm}, \cite{2009-TJ-pa}) and the Green function of $\Delta^{\alpha/2} + b(x)\cdot\nabla_x$  (\cite{2010-KB-TJ-preprint}).

In the present paper we will also use the technique of perturbation series, but in our case the conditions on $b$ only assures  the finiteness of $p_1$ (see Lemma \ref{lem:|p|_1}). In addition, the integral in (\ref{def:p_n}) is not absolutely convergent as the integral over time and space, which makes the proofs much more complicated and delicate. In particular, it is not obvious that the functions $p_n$ are well defined. In order to prove it we introduce functions $P_n(t,x,y,\u{s},\u{z})$ (see (\ref{def:p_n_ext}) for definition) and integrate it separately over $(\RR^d)^n$ and $n$-dimensional simplex $S_n(0,t)$.  Namely, we consider 
$$
|p|_n(t,x,y) = \int_{S_n(0,t)}\left|\int_{(\RR^d)^n} P_n(t,x,y,\u{s},\u{z}) d\u{z} \right| d\u{s},
$$
the majorants of the functions $p_n$ (see (\ref{def:|p|_n}) and (\ref{eq:p_n})). To estimate $|p|_n$ for $n \ge 2$ we split the integral over the simplex $S_n(0,t)$ into suitable $n+1$ parts. As a consequence Motzkin numbers appear in the estimates of $p_n$ (see \cite{2009-TJ-pa} for another connection of the perturbation series with combinatorics). In order to assure the convergence of the perturbation series, smallness of $p_1$ is needed and this is why we multiply $b$ by small constant in Theorem \ref{thm:main1}. We like to note that Theorem \ref{thm:main1} should hold for $R=\infty$, but such an extension calls for different methods.

One of the tool used in this paper is so called 3P theorem (see \cite{2007-KB-TJ-cmp}, \cite{2011-TJ-KS-preprint}, \cite{2010-TJ-KS-jee}). It allows to suitably split a ratio of three functions $p$, and in consequence to estimate $p_n$. Since for $\alpha=2$ (Gaussian case) 3P theorem does not hold, our method cannot be applied to perturbations of the classical Laplacian. Similarly as in previous papers we also exclude the case of $\alpha\le1$. Although Lemma \ref{lem:|p|_1} holds in this case, Lemma \ref{lem:p2_est} does not seem to extend to $\alpha \le 1$ and consequently our approach does not work for this case.

The paper is organized as follows. In Section 2 we collect basic properties of the transition density $p(t,x,y)$. In Section 3 we define and estimate functions $p_n$. In Section 4 we prove Theorem \ref{thm:main1}.  

All the functions considered in the sequel are Borel measurable. When
we write $f(x)\approx g(x)$, we mean that there is a number
$0<C<\infty$ independent of $x$, i.e. a {\it constant}, such that for every $x$ we have 
$C^{-1}f(x)\le g(x)\leq Cf(x)$.  As usual we write $a \land b = \min(a,b)$ and $a \vee b = \max(a,b)$.
The notation $C=C(a,b,\ldots,c)$ means that $C$ is a constant which depends
{\it only} on $a,b,\ldots,c$.

\section{Preliminaries}
Throughout the paper $d \ge 1$, and unless stated otherwise, $\alpha \in (1,2)$. In Lemmas \ref{lem:p_est}, \ref{lem:3P}, \ref{lem:grad_p} we recall well-known results about the density $p(t,x,y)$ of the isotropic $d$-dimensional $\alpha$-stable process (see \cite{2007-KB-TJ-cmp} for details). 
\begin{lem}\label{lem:p_est}
There exist a constant $C_1$ such that
\begin{equation}\label{eq:p_est}
C_1^{-1}  \left[t^{-d/\alpha} \land \frac{t}{|x|^{d+\alpha}}\right] \le p (t,x) \le C_1  \left[t^{-d/\alpha} \land \frac{t}{|x|^{d+\alpha}}\right], \qquad t \in (0,\infty), \; x \in \RR^d. 
\end{equation}
\end{lem}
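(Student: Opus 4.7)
The plan is to reduce the claim to the $t=1$ case via scaling and then combine a bound near the origin with the classical tail asymptotics of the isotropic stable density. From the Fourier definition $p(t,x)=(2\pi)^{-d}\int e^{-ix\cdot\xi}e^{-t|\xi|^\alpha}d\xi$, the change of variables $\xi=t^{-1/\alpha}\eta$ immediately gives the self-similarity identity
\begin{equation*}
p(t,x)=t^{-d/\alpha}\,p(1,t^{-1/\alpha}x),\qquad t>0,\ x\in\RR^d.
\end{equation*}
Setting $q(x):=p(1,x)$, it therefore suffices to prove the dimensionless estimate
$q(x)\approx 1\wedge |x|^{-(d+\alpha)}$ on $\RR^d$, since then with $y=t^{-1/\alpha}x$ we get
\begin{equation*}
p(t,x)=t^{-d/\alpha}q(y)\approx t^{-d/\alpha}\bigl(1\wedge |y|^{-(d+\alpha)}\bigr)=t^{-d/\alpha}\wedge\frac{t}{|x|^{d+\alpha}}.
\end{equation*}

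For the bounds on $q$, I would argue in two regimes. On the compact set $\{|x|\le 1\}$, the function $q$ is continuous (by dominated convergence in the Fourier integral), radially symmetric, and strictly positive (e.g. $q(0)=(2\pi)^{-d}\int e^{-|\xi|^\alpha}d\xi>0$ is finite, and positivity of $q$ follows from the inverse Fourier transform formula or from the fact that the stable semigroup has a strictly positive transition density). Continuity and positivity on a compact set yield constants $c_1,c_2>0$ with $c_1\le q(x)\le c_2$ for $|x|\le 1$, which matches $1\wedge |x|^{-(d+\alpha)}\asymp 1$ in this regime.

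For $|x|\ge 1$ the content of the lemma is the polynomial tail decay $q(x)\approx |x|^{-(d+\alpha)}$. This is the classical Blumenthal--Getoor estimate for the isotropic $\alpha$-stable law, reflecting the heavy-tailed Lévy measure $\nu(dy)=\mathcal{A}_{d,\alpha}|y|^{-(d+\alpha)}dy$: either one expands $e^{-|\xi|^\alpha}=1-|\xi|^\alpha+O(|\xi|^{2\alpha})$ near the origin and applies a Tauberian-type argument to the inverse Fourier transform, or one uses the probabilistic identity that for $|x|\ge 1$, $q(x)$ is comparable to $\nu(B(x,1/2))\asymp |x|^{-(d+\alpha)}$ via a one-large-jump decomposition. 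Either route yields $\lim_{|x|\to\infty}|x|^{d+\alpha}q(x)\in(0,\infty)$, which together with positivity and continuity on the annulus $\{1\le|x|\le R\}$ supplies the required two-sided comparison.

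The main (and really only) nontrivial point is the tail estimate $q(x)\approx |x|^{-(d+\alpha)}$; the rest is scaling plus compactness. Since the paper cites Lemma \ref{lem:p_est} as well known and refers the reader to \cite{2007-KB-TJ-cmp}, I would simply invoke the Blumenthal--Getoor tail estimate rather than reprove it, and then obtain the displayed bound by combining the two regimes and unwinding the scaling.
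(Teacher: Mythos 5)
The paper does not prove this lemma; it explicitly states that Lemmas \ref{lem:p_est}--\ref{lem:grad_p} are well-known results and refers the reader to the earlier work of Bogdan and Jakubowski for details. Your sketch --- scaling to reduce to $t=1$, compactness and positivity near the origin, and the classical Blumenthal--Getoor tail estimate $p(1,x)\asymp|x|^{-(d+\alpha)}$ for $|x|\geq 1$, then undoing the scaling to recover $t^{-d/\alpha}\wedge t|x|^{-(d+\alpha)}$ --- is correct and is the standard argument; since you also ultimately invoke the classical tail asymptotics rather than rederive them, this is essentially the same approach the paper takes in treating the result as known.
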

\begin{lem}[3P]\label{lem:3P}
There exist a constant $C_2$ such that
$$
p(t,x,z) p(s,z,y) \le C_2 p(t+s,x,y)[p(t,x,z) + p(s,z,y)], \quad s,t \in (0,\infty),\; x,y,z \in \RR^d
$$
\end{lem}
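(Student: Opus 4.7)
The first move is to replace the product on the left by its harmonic mean. For any positive $a,b$ one has $ab\le(a+b)\min(a,b)$, so
\[
p(t,x,z)\,p(s,z,y)\le \bigl[p(t,x,z)+p(s,z,y)\bigr]\cdot\min\bigl(p(t,x,z),\,p(s,z,y)\bigr).
\]
Hence the lemma reduces to the pointwise inequality
\[
\min\bigl(p(t,x,z),\,p(s,z,y)\bigr)\le C\, p(t+s,x,y),\qquad s,t>0,\; x,y,z\in\RR^d.
\]
By translation invariance, setting $u=z-x$ and $v=y-z$ (so $u+v=y-x$), the task becomes to show
\[
\min\bigl(p(t,u),\,p(s,v)\bigr)\le C\, p(t+s,u+v).
\]

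To prove this I would feed everything through Lemma \ref{lem:p_est}, which gives $p(\tau,w)\approx \tau^{-d/\alpha}\wedge \tau|w|^{-d-\alpha}$, and split into two cases according to the size of $|u+v|$ relative to the scale $(t+s)^{1/\alpha}$. If $|u+v|\le (t+s)^{1/\alpha}$, then $p(t+s,u+v)\approx (t+s)^{-d/\alpha}$, while trivially $\min(p(t,u),p(s,v))\le p(t,u)\wedge p(s,v)\le C_1\,(t\wedge s)^{-d/\alpha}\le C_1\,2^{d/\alpha}(t+s)^{-d/\alpha}$, which suffices. If $|u+v|>(t+s)^{1/\alpha}$, then $p(t+s,u+v)\approx (t+s)|u+v|^{-d-\alpha}$. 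Here the triangle inequality gives $|u|\ge |u+v|/2$ or $|v|\ge|u+v|/2$; in the first case $p(t,u)\le C_1\,t|u|^{-d-\alpha}\le 2^{d+\alpha}C_1\,(t+s)|u+v|^{-d-\alpha}$, hence the same bound for the minimum, and symmetrically in the second case.

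Combining the two cases with Lemma \ref{lem:p_est} yields a constant $C_2=C_2(d,\alpha)$ for which the target inequality holds, and then the initial reduction converts it into the 3P estimate. I do not anticipate a genuine obstacle: the only delicate point is that one must consistently use the \emph{minimum} (not the maximum) on the left side of the reduced inequality, since it is only the smaller of $p(t,u)$ and $p(s,v)$ that can be controlled by $p(t+s,u+v)$ when one of $|u|,|v|$ is much smaller than $|u+v|$. The entire argument is purely geometric and uses nothing beyond the two-sided bound in Lemma \ref{lem:p_est} and the triangle inequality.
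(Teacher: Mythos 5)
Your reduction via $ab\le (a+b)\min(a,b)$ and the case split on $|u+v|$ vs.~$(t+s)^{1/\alpha}$ is exactly the standard way this 3P estimate is proved, and the argument is sound; the paper itself gives no proof but simply cites \cite{2007-KB-TJ-cmp}, whose proof proceeds along the same lines. One slip in the first case: the bound $p(t,u)\wedge p(s,v)\le C_1 t^{-d/\alpha}\wedge C_1 s^{-d/\alpha}$ equals $C_1(t\vee s)^{-d/\alpha}$, not $C_1(t\wedge s)^{-d/\alpha}$ as written --- the map $\tau\mapsto \tau^{-d/\alpha}$ is decreasing, so the minimum of the values corresponds to the maximum of the times. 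The displayed inequality $(t\wedge s)^{-d/\alpha}\le 2^{d/\alpha}(t+s)^{-d/\alpha}$ is in fact false (take $t$ small, $s$ large); it is $(t\vee s)^{-d/\alpha}\le 2^{d/\alpha}(t+s)^{-d/\alpha}$, coming from $t\vee s\ge (t+s)/2$, that you need and that completes the case. With that correction the proof is complete.
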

Let $p^{(m)}$ be the $\alpha$-stable density in dimension $m$.
\begin{lem}\label{lem:grad_p}
 For all $t>0$ and $x \in \RR^d$,
\begin{equation}\label{eq:grad_p}
\nabla_x p^{(d)} (t,x) = -2\pi x p^{(d+2)}(t,\tilde{x})\,,
\end{equation}
where $\tilde{x} \in \RR^{d+2}$ is such that $|\tilde{x}| = |x|$.
\end{lem}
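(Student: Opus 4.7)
The plan is to exploit the radial symmetry of $p^{(d)}(t,\cdot)$ and apply the classical ``dimension walking'' identity for Fourier transforms of radial functions, which links the Fourier integrals in $\RR^d$ and $\RR^{d+2}$. Since $p^{(d)}(t,\cdot)$ is radial, the gradient is automatically parallel to $x$, so it suffices to compute the radial derivative $\partial_\rho p^{(d)}(t,x)$ at $\rho=|x|$ and multiply by $x/|x|$; the target identity then becomes the scalar claim $\partial_\rho p^{(d)}(t,x) = -2\pi \rho\, p^{(d+2)}(t,\tilde{x})$ with $|\tilde{x}|=\rho$.

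First I would rewrite the defining Fourier integral of $p^{(d)}(t,x)$ in polar coordinates using the standard Hankel formula for Fourier transforms of radial functions, getting
$$p^{(d)}(t,x) = (2\pi)^{-d/2}\, |x|^{1-d/2}\!\int_0^\infty J_{d/2-1}(|x|\,r)\, r^{d/2}\, e^{-t r^\alpha}\, dr,$$
where $J_\nu$ is the Bessel function of the first kind of order $\nu$. Next, with $\rho=|x|$ and $\nu = d/2-1$, I would differentiate in $\rho$ under the integral sign (justified by the rapid decay of $e^{-tr^\alpha}$), applying the standard Bessel identity
$$\frac{d}{dz}\bigl[z^{-\nu} J_\nu(z)\bigr] = -z^{-\nu} J_{\nu+1}(z)$$
with $z=\rho r$. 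A short computation gives
$$\partial_\rho p^{(d)}(t,x) = -(2\pi)^{-d/2}\, \rho^{1-d/2}\!\int_0^\infty J_{d/2}(\rho\, r)\, r^{(d+2)/2}\, e^{-tr^\alpha}\, dr.$$
Finally I would recognize the right-hand side as the polar representation of $p^{(d+2)}(t,\tilde{x})$: indeed, the same Hankel formula in dimension $d+2$ (with $\nu'=d/2$) gives
$$p^{(d+2)}(t,\tilde{x}) = (2\pi)^{-(d+2)/2}\, \rho^{-d/2}\!\int_0^\infty J_{d/2}(\rho r)\, r^{(d+2)/2}\, e^{-tr^\alpha}\, dr,$$
so matching $(2\pi)$-powers and powers of $\rho$ yields $\partial_\rho p^{(d)}(t,x) = -2\pi \rho\, p^{(d+2)}(t,\tilde{x})$, and hence (\ref{eq:grad_p}).

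The main obstacle is not conceptual but purely bookkeeping: the normalization constants in the Fourier--Hankel pair differ between references, so one has to track the factors of $(2\pi)^{d/2}$ and the powers of $\rho$ and $r$ carefully to land on the precise prefactor $-2\pi$ (rather than, say, $-1$ or $-(2\pi)^{-1}$). An alternative route avoiding Bessel functions altogether is to start from $p^{(d+2)}(t,\tilde{x})$ written as an integral over $\RR^d\times\RR^2$, perform the inner $\RR^2$-integral explicitly to obtain a $d$-dimensional Fourier integral of a tail function $F(|\xi|)=\int_{|\xi|}^\infty w e^{-tw^\alpha}dw$, and then use integration by parts in $\xi$ with $\nabla_\xi F(|\xi|)=-\xi e^{-t|\xi|^\alpha}$; this directly matches $\nabla_x p^{(d)}(t,x)$ via $x\, \hat{F}(x) = -i\widehat{\nabla_\xi F}(x)$ and gives the same identity without invoking Bessel machinery.
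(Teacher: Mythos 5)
Your proof is correct, and both routes you sketch land precisely on the $-2\pi$ prefactor. I checked the constants: with the paper's normalization $p^{(d)}(t,x)=(2\pi)^{-d}\int_{\RR^d}e^{-ix\cdot\xi}e^{-t|\xi|^\alpha}d\xi$, the Hankel representation is
$p^{(d)}(t,x)=(2\pi)^{-d/2}\rho^{1-d/2}\int_0^\infty J_{d/2-1}(\rho r)\,r^{d/2}e^{-tr^\alpha}dr$ with $\rho=|x|$, and the identity $\frac{d}{dz}[z^{-\nu}J_\nu(z)]=-z^{-\nu}J_{\nu+1}(z)$ with $\nu=d/2-1$ gives exactly the formula you state for $\partial_\rho p^{(d)}$; comparing with the dimension-$(d+2)$ Hankel representation produces the factor $(2\pi)^{-d/2}\cdot(2\pi)^{(d+2)/2}\cdot\rho^{(1-d/2)-(-d/2)}=2\pi\rho$ as required. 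Your alternative route is also correct: integrating $p^{(d+2)}(t,\tilde{x})$ over the extra $\RR^2$ coordinates produces the tail function $F(a)=\int_a^\infty we^{-tw^\alpha}dw$, and since $\nabla_\xi F(|\xi|)=-\xi e^{-t|\xi|^\alpha}$, one integration by parts in $\xi$ converts $\nabla_x p^{(d)}$ into $-2\pi x$ times the resulting $d$-dimensional Fourier integral, which is $p^{(d+2)}(t,\tilde x)$.

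For comparison: the paper states Lemma \ref{lem:grad_p} without proof, recalling it as a known fact from \cite{2007-KB-TJ-cmp}, so there is no proof to match yours against. It is worth noting that the most common route to this identity in the stable-process literature is via subordination: writing $p^{(d)}(t,x)=\int_0^\infty g_s^{(d)}(x)\,\eta_t(s)\,ds$ with $g_s^{(d)}$ the Gaussian kernel and $\eta_t$ the $\alpha/2$-stable subordinator density, one has the elementary Gaussian identity $\nabla_x g_s^{(d)}(x)=-\tfrac{x}{2s}g_s^{(d)}(x)=-2\pi x\,g_s^{(d+2)}(\tilde x)$, and integrating against $\eta_t(s)\,ds$ gives (\ref{eq:grad_p}) immediately. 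That argument is shorter and makes the dimension shift transparent, whereas your Bessel computation is self-contained and works directly from the Fourier definition without invoking the subordination structure; both are valid, and your second (integration-by-parts) route is probably the most elementary of the three since it needs no special functions and no subordination.
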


\noindent Applying (\ref{eq:p_est}) to (\ref{eq:grad_p}) we get
\begin{equation}\label{eq:grad_p_est}
|\nabla_x p (t,x)| \le C_3 t^{-1/\alpha} p(t,x)\,, \qquad \qquad t \in (0,\infty), \; x \in \RR^d
\end{equation}

Our aim is to prove that functions $p_n$ defined in (\ref{def:p_1}) and (\ref{def:p_n}) satisfy $p_n(t,x,y) \le C_n p(t,x,y)$, where $C_n$ are the constants such that $\sum_{n=0}^\infty C_n < \infty$. Since the condition (\ref{cond:|b|}) does not guarantee convergence of the integral 
\begin{equation}\label{eq:notconv}
\int_0^t \int_{\RR^d} p(t-s,x,z) |b(z)||\nabla_z p(s,z,y)|\,dz\,ds\,,
\end{equation}
we cannot follow the proofs from \cite{2007-KB-TJ-cmp} and \cite{2010-TJ-KS-jee}. However the inner integral of (\ref{eq:notconv}) is convergent. Indeed by (\ref{eq:grad_p_est}), Lemma \ref{lem:3P} and (\ref{cond:|b|}) we have
\begin{align}
& \int_{\RR^d} p(t-s,x,z) |b(z)||\nabla_z p(s,z,y)|\,dz \notag\\
 &\le c_1 s^{-1/\alpha} \int_{\RR^d} p(t-s,x,z) p(s,z,y) |b(z)| \,dz \notag\\
&\le c_2 s^{-1/\alpha} p(t,x,y) \int_{\RR^d} (p(t-s,x,z) + p(s,z,y)) |b(z)| \,dz \notag\\
& \le c_2C_b s^{-1/\alpha}  [(t-s)^{1/\alpha-1} + s^{1/\alpha-1}] p(t,x,y) \label{eq:inner1}
\end{align}
Therefore instead of (\ref{eq:notconv}) we will consider
\begin{equation}\label{eq:conv}
\int_0^t \left|\int_{\RR^d} p(t-s,x,z) b(z) \cdot \nabla_z p(s,z,y)\,dz\right|\,ds\,.
\end{equation}
In order to estimate (\ref{eq:conv}) we will use the following lemma.
\begin{lem}\label{lem:Move_grad}
For all $s,t \ge 0$ and $x,y \in \RR^d$ we have
\begin{align}\label{lem:move_grad}
\int_{\RR^d} p(t,x,z) b(z) \cdot \nabla_z p(s,z,y)\, dz = - \int_{\RR^d} \nabla_z p(t,x,z) \cdot b(z) p(s,z,y)\, dz\,.
\end{align}
\end{lem}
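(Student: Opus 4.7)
The plan is to prove the identity by distributional integration by parts, exploiting $\diver b = 0$ in the sense of distributions. Fix $s,t>0$ (the case $s=0$ or $t=0$ is trivial, since $p(0,\cdot,\cdot)$ reduces the identity to $0=0$ in a limiting sense), write $\phi(z) = p(t,x,z)\,p(s,z,y)$, and note that $\phi \in C^\infty(\RR^d)$ in $z$ because the isotropic $\alpha$-stable density is smooth.

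First I would check that both sides of (\ref{lem:move_grad}) are absolutely convergent integrals for fixed $s,t>0$. For the left-hand side, the bound (\ref{eq:grad_p_est}) gives $|\nabla_z p(s,z,y)| \le C_3 s^{-1/\alpha} p(s,z,y)$, and then by Lemma \ref{lem:3P} and (\ref{cond:|b|}) we get
\begin{equation*}
\int_{\RR^d} p(t,x,z)\,|b(z)|\,|\nabla_z p(s,z,y)|\,dz \le c\,s^{-1/\alpha}\bigl[t^{1/\alpha-1}+s^{1/\alpha-1}\bigr]\,p(t+s,x,y)<\infty,
\end{equation*}
exactly as in the derivation of (\ref{eq:inner1}). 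Swapping the roles of $s$ and $t$ yields the same bound for the right-hand side; in particular $|b(z)|\phi(z)\in L^1(\RR^d)$ as well.

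Second I would regularize by a cutoff. Choose $\eta_R \in C^\infty_c(\RR^d)$ with $\eta_R=1$ on $B(0,R)$, $\eta_R=0$ off $B(0,2R)$, and $|\nabla\eta_R|\le c/R$. Condition (\ref{cond:|b|}) implies $b\in L^1_{\mathrm{loc}}(\RR^d)$ (test with any compact $K$ and a fixed $t$), so the distributional identity $\diver b=0$ means $\int_{\RR^d} b\cdot \nabla\psi\,dz = 0$ for every $\psi\in C_c^\infty(\RR^d)$. Applying this with $\psi=\phi\eta_R\in C_c^\infty(\RR^d)$ and using the product rule gives
\begin{equation*}
0 = \int_{\RR^d}\eta_R(z)\,b(z)\cdot\nabla\phi(z)\,dz + \int_{\RR^d}\phi(z)\,b(z)\cdot\nabla\eta_R(z)\,dz.
\end{equation*}
The second integral is bounded in absolute value by $(c/R)\int_{R\le |z|\le 2R}|b(z)|\phi(z)\,dz$, which tends to $0$ as $R\to\infty$ because $|b|\phi\in L^1$ (the domain of integration is a tail). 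For the first integral, since $\nabla\phi = (\nabla_z p(t,x,z))\,p(s,z,y) + p(t,x,z)\,\nabla_z p(s,z,y)$, the integrand is dominated by a fixed $L^1$ function (combining the two $L^1$ bounds above), and dominated convergence yields $\int_{\RR^d} b\cdot\nabla\phi\,dz$ in the limit. Splitting this limit via the product rule produces the two terms in (\ref{lem:move_grad}) with opposite signs, giving the claim.

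The main obstacle is verifying the vanishing of the boundary term $\int\phi\,b\cdot\nabla\eta_R\,dz$ as $R\to\infty$. This hinges on the $L^1$ integrability of $|b|\phi$, which in turn rests on (\ref{cond:|b|}) together with the $3P$ inequality of Lemma \ref{lem:3P}; had $b$ been merely a Radon measure, or had $\phi$ decayed too slowly, this step could fail. Here the faster-than-polynomial decay of $p(t,x,\cdot)p(s,\cdot,y)$ at infinity, combined with the integrability of $p(t,x,\cdot)|b|$ from (\ref{cond:|b|}), is exactly what makes the cutoff argument go through.
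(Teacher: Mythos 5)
Your proof is correct and follows essentially the same route as the paper's: both regularize with a smooth compactly supported cutoff (your $\eta_R$ plays the role of the paper's $g(z/n)$), apply the distributional identity $\diver b=0$ to the cutoff test function, and pass to the limit via dominated convergence using the $L^1(|b|\,dz)$ bound on $p(t,x,\cdot)p(s,\cdot,y)$ and its gradient obtained from (\ref{eq:grad_p_est}), Lemma~\ref{lem:3P}, and (\ref{cond:|b|}). The only cosmetic difference is that you isolate and estimate the boundary term $\int\phi\,b\cdot\nabla\eta_R$ explicitly, whereas the paper absorbs it into a single domination bound; the substance is the same.
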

\begin{proof}
Let $g \in C_c^\infty(\RR^d)$ be such that 
$$
g(z) =
\begin{cases}
1 &{\rm for \;} |z| \le 1,\\
0 &{\rm for \;} |z| \ge 2\,.
\end{cases}
$$
Let $f_n(z) = g(z/n) p(t,x,z) p(s,z,y) \in C_c^\infty(\RR^d)$. Then $\nabla_z f_n(z) \to \nabla_z(p(t,x,z) p(s,z,y))$ as $n \to \infty$. Furthermore 
$$
|\nabla_z f_n(z)| \le c(|\nabla_z(p(t,x,z) p(s,z,y))| + p(t,x,z) p(s,z,y) )
$$
for some constant $c>0$.
By (\ref{eq:grad_p_est}), Lemma \ref{lem:3P} and (\ref{cond:|b|})
$$
\int_{\RR^d} [|\nabla_z(p(t,x,z) p(s,z,y))| + p(t,x,z) p(s,z,y) ] |b(z)|\,dz < \infty\,.
$$
Therefore by (\ref{cond:b}) and Lebesgue theorem
$$
0 = \lim_{n \to \infty}\int_{\RR^d} \nabla_z f_n(z) \cdot b(z)\, dz = \int_{\RR^d}\nabla_z(p(t,x,z) p(s,z,y)) \cdot b(z)\, dz\,,
$$
which ends the proof.
\end{proof}
Similarly condition (\ref{cond:b}) yields that for all $s,t>0$ and $\xi, y \in \RR^d$
\begin{align}
\begin{split}\label{eq:move_grad}
&\int_{\RR^d} [b(\xi) \cdot \nabla_\xi p(t,\xi,z)] b(z) \cdot \nabla_z p(s,z,y)\, dz \\
&= - \int_{\RR^d} \nabla_z [b(\xi) \cdot \nabla_\xi p(t,\xi,z)] \cdot b(z) p(s,z,y)\, dz\,.
\end{split}
\end{align}
In the following lemma we will use  (\ref{lem:move_grad}) to show that the function $p_1$ introduced in (\ref{def:p_n}) is well defined. In a similar way we will apply (\ref{eq:move_grad}) to estimate other functions $p_n$. 
\begin{lem}\label{lem:|p|_1}
There exists a constant $C$ such that for all $t>0$, $x,y \in \RR^d$, 
\begin{equation}
\int_0^{t} \left| \int_{\RR^d} p(t-s,x,z) b(z) \cdot \nabla_z p(s,z,y)\, dz\right|\, ds  \le C p(t,x,y)\,.
\end{equation}
\end{lem}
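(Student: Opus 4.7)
Applying \eqref{eq:inner1} directly and integrating in $s$ fails: near $s=0$ the factor $s^{-1/\alpha}\cdot s^{1/\alpha-1}=s^{-1}$ is not integrable. Overcoming this non-integrable singularity is the main obstacle; the trick is to exploit Lemma \ref{lem:Move_grad}, which gives the dual representation
\begin{equation*}
\int_{\RR^d} p(t-s,x,z) b(z) \cdot \nabla_z p(s,z,y)\, dz = -\int_{\RR^d} \nabla_z p(t-s,x,z) \cdot b(z)\, p(s,z,y)\, dz .
\end{equation*}
Repeating the chain of inequalities \eqref{eq:inner1} on the right-hand side, now applying the gradient estimate \eqref{eq:grad_p_est} to $\nabla_z p(t-s,x,z)$ (instead of to $\nabla_z p(s,z,y)$), gives a second pointwise bound on the inner integral, namely
\begin{equation*}
c\, C_b\, (t-s)^{-1/\alpha}\bigl[(t-s)^{1/\alpha-1}+s^{1/\alpha-1}\bigr] p(t,x,y).
\end{equation*}
The dangerous factor has been shifted from $s^{-1/\alpha}$ to $(t-s)^{-1/\alpha}$; the non-integrable singularity has moved from $s=0$ to $s=t$.

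With both bounds in hand, I split $(0,t)=(0,t/2)\cup(t/2,t)$ and use on each half whichever representation tames the bad endpoint. On $(t/2,t)$ I use \eqref{eq:inner1}: the prefactor satisfies $s^{-1/\alpha}\le (t/2)^{-1/\alpha}$, and the two resulting integrals
\begin{equation*}
\int_{t/2}^t s^{-1/\alpha}(t-s)^{1/\alpha-1}\,ds, \qquad \int_{t/2}^t s^{-1}\,ds
\end{equation*}
are bounded uniformly in $t$ (the first by the substitution $s=tu$, which cancels the $t$-dependence and leaves an integrable Beta-type integrand near $u=1$ because $\alpha>1$; the second equals $\log 2$). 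On $(0,t/2)$ I switch to the dual representation and use the symmetric bound: now $(t-s)^{-1/\alpha}\le (t/2)^{-1/\alpha}$, and by the same kind of rescaling the two integrals
\begin{equation*}
\int_{0}^{t/2}(t-s)^{-1}\,ds, \qquad \int_{0}^{t/2}(t-s)^{-1/\alpha}s^{1/\alpha-1}\,ds
\end{equation*}
are bounded uniformly in $t$.

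Summing the four contributions and extracting the common factor $p(t,x,y)$ from \eqref{eq:inner1} yields the desired estimate with a constant $C=C(d,\alpha,C_b)$ independent of $t$, $x$, $y$. The conceptual heart of the argument is therefore Lemma \ref{lem:Move_grad}: it furnishes a symmetric companion bound that is exactly the one needed to integrate across the singular endpoint $s=0$, while the original bound \eqref{eq:inner1} handles the regular endpoint $s=t$; the fact that neither representation alone is integrable but each is integrable on the appropriate half is exactly the delicate cancellation the lemma brings out.
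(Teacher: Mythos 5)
Your proof is correct and follows exactly the paper's own argument: the paper also splits at $t/2$, applies Lemma \ref{lem:Move_grad} on $(0,t/2)$ to move the gradient onto $p(t-s,x,z)$ (where $t-s$ is bounded below by $t/2$), and uses the original representation and bound \eqref{eq:inner1} on $(t/2,t)$, with the same Beta-type integrals controlling both pieces. You have merely spelled out the elementary calculus that the paper leaves implicit.
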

\begin{proof}
By Lemma \ref{lem:Move_grad} and (\ref{eq:inner1}) we obtain
\begin{align*}
&\int_0^{t} \left| \int_{\RR^d} p(t-s,x,z) b(z) \cdot \nabla_z p(s,z,y)\, dz\right|\, ds \notag\\ 
& =  \int_{0}^{t/2} \left| \int_{\RR^d} p(t-s,x,z) b(z) \cdot \nabla_z p(s,z,y)\, dz\right|\, ds \notag \\
& + \int_{t/2}^t \left| \int_{\RR^d} p(t-s,x,z) b(z) \cdot \nabla_z p(s,z,y)\, dz\right|\, ds \notag \\ 
& =  \int_0^{t/2} \left| \int_{\RR^d}   \nabla_zp(t-s,x,z)\cdot b(z)   p(s,z,y)\, dz\right|\, ds \notag\\
& +  \int_{t/2}^t \left| \int_{\RR^d} p(t-s,x,z) b(z) \cdot \nabla_z p(s,z,y)\, dz\right|\, ds \le cp(t,x,y) \notag 
\end{align*}
\end{proof}
\noindent We will also need the following two auxiliary lemmas
\begin{lem}\label{lem:aux1}
There exists a constant $C_4$ such that for all $t>0$ and $z,w \in \RR^d$ we have
$$
|b(z) \cdot\nabla_z (b(w) \cdot\nabla_w p(t,z,w))| \le C_4 |b(z)| |b(w)| p(t,z,w) t^{-2/\alpha}\,.
$$
\end{lem}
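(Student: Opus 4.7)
The plan is to use the Chapman--Kolmogorov identity $p(t,z,w)=\int_{\RR^d}p(t/2,z,y)\,p(t/2,y,w)\,dy$ to split the mixed second derivative so that exactly one derivative lands on each copy of $p(t/2,\cdot,\cdot)$; after that only the first-order estimate (\ref{eq:grad_p_est}) is needed. Since $b_j(w)$ does not depend on $z$ and $b_i(z)$ does not depend on $w$, differentiating under the integral sign gives
\begin{align*}
b(z)\cdot\nabla_z\bigl(b(w)\cdot\nabla_w p(t,z,w)\bigr)
=\int_{\RR^d}\bigl(b(z)\cdot\nabla_z p(t/2,z,y)\bigr)\bigl(b(w)\cdot\nabla_w p(t/2,y,w)\bigr)\,dy.
\end{align*}

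From there I would estimate each dot product pointwise using $|a\cdot v|\le|a|\,|v|$ combined with (\ref{eq:grad_p_est}):
\[
|b(z)\cdot\nabla_z p(t/2,z,y)|\le C_3(t/2)^{-1/\alpha}\,|b(z)|\,p(t/2,z,y),
\]
and analogously in the $w$ variable. Substituting both bounds, pulling $|b(z)|\,|b(w)|\,t^{-2/\alpha}$ out of the integral, and applying Chapman--Kolmogorov in reverse collapses what is left back to $p(t,z,w)$, yielding the claim with $C_4=2^{2/\alpha}C_3^2$.

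I do not foresee any serious obstacle. The only technical point is justifying the exchange of differentiation and integration, which is immediate from the smoothness and Schwartz-type decay of $p(t/2,\cdot)$ and $\nabla p(t/2,\cdot)$ via dominated convergence. Note that, in contrast with Lemma \ref{lem:|p|_1}, the drift conditions (\ref{cond:|b|})--(\ref{cond:b}) are not invoked here: this lemma is purely a pointwise second-order upgrade of (\ref{eq:grad_p_est}), and the coefficients $b(z)$ and $b(w)$ are simply carried through the estimate.
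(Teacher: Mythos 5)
Your proof is correct, but it goes by a different route than the paper. The paper differentiates the explicit dimension-shift identity of Lemma \ref{lem:grad_p} twice, writing $b(z)\cdot\nabla_z\bigl(b(w)\cdot\nabla_w p(t,z,w)\bigr)$ exactly in terms of the stable densities $p^{(d+2)}$ and $p^{(d+4)}$ evaluated at lifted points, and then invokes the sharp two-sided bound (\ref{eq:p_est}) to compare $p^{(d+2)}(t,\cdot)$ and $|z-w|^2 p^{(d+4)}(t,\cdot)$ with $t^{-2/\alpha}p^{(d)}(t,z,w)$. You instead exploit the Chapman--Kolmogorov identity to factor $p(t,z,w)$ into two half-time kernels, place one derivative on each factor, and apply the first-order bound (\ref{eq:grad_p_est}) twice before recombining; the constant $2^{2/\alpha}C_3^2$ comes out correctly, and the interchange of $\partial_{z_i}\partial_{w_j}$ with the $y$-integral is indeed routine given the smoothness of $p(t/2,\cdot)$ and the domination $|\nabla p(t/2,\cdot,y)|\le C_3(t/2)^{-1/\alpha}p(t/2,\cdot,y)$, which is integrable in $y$. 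The trade-off: your argument is more elementary and more portable, since it needs only the semigroup property and a first-order gradient estimate (so it would work for any kernel with those two features), whereas the paper's computation yields an exact formula for the mixed second derivative, avoids any differentiation-under-the-integral justification, and keeps the dimension-shift machinery that is already set up for Lemma \ref{lem:grad_p}. You are also right that conditions (\ref{cond:|b|})--(\ref{cond:b}) play no role here; the paper's proof does not use them either.
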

\begin{proof}
From (\ref{eq:grad_p}) we get
\begin{align*}
\nabla_w p^{(d)} (t,z,w) & =  2\pi (z-w) p^{(d+2)}(t,\tilde{z},\tilde{w}),\\
\nabla_z p^{(d+2)} (t,\tilde{z},\tilde{w}) & = -2\pi (z-w) p^{(d+4)}(t,\hat{z},\hat{w}),
\end{align*}
where $\tilde{z} = (z,0,0) \in \RR^{d+2}$ and $\hat{z} = (\tilde{z},0,0) \in \RR^{d+4}$ (we use similar notation for $\tilde{w}$ and $\hat{w}$).
Therefore,
\begin{align*}
& b(z) \cdot\nabla_z (b(w) \cdot\nabla_w p(t,z,w)) = 2\pi b(z) \cdot \nabla_z [b(w) \cdot(z-w) p^{(d+2)}(t,\tilde{z},\tilde{w})] \\
& =2\pi b(z) \cdot [b(w) p^{(d+2)}(t,\tilde{z},\tilde{w}) - 2\pi (b(w) \cdot(z-w)) (z-w)p^{(d+4)}(t,\hat{z},\hat{w})] \\
& =2\pi b(z) \cdot b(w) p^{(d+2)}(t,\tilde{z},\tilde{w}) - 4\pi^2 (b(w) \cdot(z-w)) b(z) \cdot (z-w) p^{(d+4)}(t,\hat{z},\hat{w})
\end{align*}
Applying (\ref{eq:p_est}) we obtain the assertion of the lemma.
\end{proof}

\begin{lem}\label{lem:aux2}
There exists a constant $C_5$ such that for all $t>0$
$$
\int_0^{t/2} \int_{t/2}^t (r-u)^{-\frac{2}{\alpha}} ((t-r)^{\frac{1}{\alpha}-1} + (r-u)^{\frac{1}{\alpha}-1}) ((t-u)^{\frac{1}{\alpha}-1} + u^{\frac{1}{\alpha}-1})\,dr\,du < C_5\,.
$$
\end{lem}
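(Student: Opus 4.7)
The plan is to first reduce the estimate to a fixed value of $t$ by a scaling argument, and then bound the resulting integral by expanding the product of the two parenthesized sums and checking integrability at each singular point.

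First I would substitute $u=ts$ and $r=t\rho$, so that $s\in(0,1/2)$ and $\rho\in(1/2,1)$. Each factor transforms homogeneously in $t$: $(r-u)^{-2/\alpha}$ contributes $t^{-2/\alpha}$, each of the two parenthesized sums contributes $t^{1/\alpha-1}$, and the measure $dr\,du$ contributes $t^{2}$. Since
\[
-\tfrac{2}{\alpha}+2\bigl(\tfrac{1}{\alpha}-1\bigr)+2=0,
\]
the integral is independent of $t$, so it suffices to bound by a constant the integral with $t=1$, on the rectangle $(s,\rho)\in(0,1/2)\times(1/2,1)$.

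Next I would expand the product of the two sums into four pieces, each of the form $(\rho-s)^{-a}f(\rho)g(s)$ with $a\in\{2/\alpha,\;1+1/\alpha\}$ and $f,g$ chosen from $(1-\rho)^{1/\alpha-1}$, $(1-s)^{1/\alpha-1}$, $s^{1/\alpha-1}$, or a bounded factor. On $s<1/2$ one has $(1-s)^{1/\alpha-1}\le 2^{1-1/\alpha}$, so that factor is always bounded, and symmetrically for $(1-\rho)^{1/\alpha-1}$ when it does not appear as a factor. The only delicate region is the corner $s=\rho=1/2$ where $\rho-s\to 0$. Performing the inner integration in $s$ explicitly yields, as $\rho\downarrow 1/2$, a leading behaviour of $(\rho-1/2)^{1-2/\alpha}$ when $a=2/\alpha$ and $(\rho-1/2)^{-1/\alpha}$ when $a=1+1/\alpha$; the pieces in which $g(s)=s^{1/\alpha-1}$ are handled by splitting $s\in(0,1/4)$ (where $\rho-s$ is bounded below) from $s\in(1/4,1/2)$ (where $s^{1/\alpha-1}$ is bounded).

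Finally, the outer integrals against $(1-\rho)^{1/\alpha-1}$ or $1$ reduce to standard Beta-type integrals, convergent because every exponent of a singular factor strictly exceeds $-1$: indeed $1-2/\alpha>-1$, $-1/\alpha>-1$, and $1/\alpha-1>-1$ all hold precisely since $\alpha\in(1,2)$. The main obstacle is therefore bookkeeping: one must verify that the compound singularity at $(s,\rho)=(1/2,1/2)$ decouples from the boundary singularities at $\rho=1$ and $s=0$, and that the exponent budget in each of the four pieces stays within the integrability threshold. The hypothesis $\alpha>1$ is exactly what keeps every exponent on the integrable side, consistent with the author's remark that the argument fails for $\alpha\le 1$.
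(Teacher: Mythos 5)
Your proposal is correct, and it takes a genuinely different route from the paper. You begin with a scaling substitution $u=ts$, $r=t\rho$ and verify that the combined homogeneity degree is $-\tfrac2\alpha+2(\tfrac1\alpha-1)+2=0$, reducing to the case $t=1$; then you expand the product of the two parenthesized sums into four pieces of the form $(\rho-s)^{-a}f(\rho)g(s)$ and verify convergence term by term at the corner $\rho=s=\tfrac12$ and at the boundary singularities $\rho=1$, $s=0$. The paper instead keeps the $t$-dependence explicit and applies the elementary inequality $a^{-p}+b^{-p}\le 2^{1-p}(a+b)^p(ab)^{-p}$ (for $a,b>0$, $p=1-\tfrac1\alpha\in(0,1)$) to both parenthesized sums at once; after the factors $(t-u)^{1-1/\alpha}$ and $(t-u)^{1/\alpha-1}$ cancel, the whole integrand collapses to the single majorant $t^{1-1/\alpha}(r-u)^{-1/\alpha-1}(t-r)^{1/\alpha-1}u^{1/\alpha-1}$, and a split of the $r$-integral at $3t/4$ plus one Beta-type evaluation finishes the proof. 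Both arguments hinge on the same exponent arithmetic (namely $1-\tfrac2\alpha>-1$ and $-\tfrac1\alpha>-1$, both requiring $\alpha>1$), so they fail at exactly the same place for $\alpha\le 1$; yours is a straightforward four-term bookkeeping argument, while the paper's trades the expansion for a small algebraic trick that eliminates the cross-terms up front. Your sketch would benefit from actually writing out one representative inner integral (say the $a=2/\alpha$, bounded-$g$ case) and the ensuing Beta integral, but the structure and the exponent checks are all sound.
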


\begin{proof}
First we note that $a^p+b^p \le 2^{1-p} (a + b)^p$ for $a,b \ge 0$ and $0<p<1$. Consequently  $a^{-p} + b^{-p} \le 2^{1-p} (a + b)^p (ab)^{-p}$. Hence it suffices to show
$$
\int_0^{t/2} \int_{t/2}^t (r-u)^{-\frac{1}{\alpha}-1} (t-r)^{\frac{1}{\alpha}-1}  u^{\frac{1}{\alpha}-1} t^{1-\frac{1}{\alpha}}  \,dr\,du < c.
$$ 
Splitting the second integral into intervals $(t/2,3t/4)$ and $(3t/4,t)$ we get
$$
 t^{1-\frac{1}{\alpha}} \int_0^{t/2} \int_{t/2}^t (r-u)^{-\frac{1}{\alpha}-1} (t-r)^{\frac{1}{\alpha}-1}  u^{\frac{1}{\alpha}-1}\,dr\,du 
\le c_1   \int_0^{t/2} (t/2-u)^{-\frac{1}{\alpha}} u^{\frac{1}{\alpha}-1} < c_2\,.
$$ 
\end{proof}

We note that Lemma \ref{lem:|p|_1} extends to $\alpha \le 1$, however the following lemma does not and this is why we generally assume $\alpha \in (1,2)$ in the paper. Lemma \ref{lem:p2_est} will allow us to estimate the functions $p_n$ for $n \ge 2$.

\begin{lem}\label{lem:p2_est}
There exists a constant $C$ such that for all $t >0$ and $x,y \in \RR^d$,
\begin{align}
&\int_0^{t/2}\int_{t/2}^t \int_{\RR^d}\int_{\RR^d}\,dw\,d\xi\,dr\,du  \nonumber\\
& p(u,x,\xi) \big|\nabla_w\big(b(\xi) \cdot \nabla_\xi p(r-u,\xi,w) \big) \cdot b(w)\big| p(t-r,w,y)  < Cp(t,x,y)\,. \label{Eq:p2_est}
\end{align}
\end{lem}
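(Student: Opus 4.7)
The plan is to reduce the integrand to a product of transition densities with scalar weights, chain the three densities into $p(t,x,y)$ via two applications of the 3P inequality (Lemma~\ref{lem:3P}), and finally dispatch the remaining time integral with Lemma~\ref{lem:aux2}.

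First, I would apply Lemma~\ref{lem:aux1} to the inner gradient factor, which dominates it pointwise by $C_4|b(\xi)||b(w)|p(r-u,\xi,w)(r-u)^{-2/\alpha}$. This converts the left-hand side of (\ref{Eq:p2_est}) into $C_4$ times
\begin{equation*}
\int_0^{t/2}\!\!\int_{t/2}^t (r-u)^{-2/\alpha}\int_{\RR^d}\!\!\int_{\RR^d}\!\!|b(\xi)||b(w)|\,p(u,x,\xi)p(r-u,\xi,w)p(t-r,w,y)\,dw\,d\xi\,dr\,du.
\end{equation*}

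Next I would chain the three densities. Because $u<t/2<r$, the right intermediate time is $t-u$: applying Lemma~\ref{lem:3P} first to the pair $p(r-u,\xi,w)p(t-r,w,y)$ yields $C_2\,p(t-u,\xi,y)[p(r-u,\xi,w)+p(t-r,w,y)]$, and a second application to $p(u,x,\xi)p(t-u,\xi,y)$ yields $C_2\,p(t,x,y)[p(u,x,\xi)+p(t-u,\xi,y)]$. Combined, these bound the triple product by $C_2^2 p(t,x,y)$ times a sum of four two-factor products of densities. Expanding and integrating in $w$ first, then $\xi$, each inner integral of the form $\int_{\RR^d} p(s,a,\cdot)|b(\cdot)|$ is controlled by $C_b s^{1/\alpha-1}$ via (\ref{cond:|b|}), using the symmetry $p(s,\xi,w)=p(s,w,\xi)$ when needed (for instance to bound $\int p(t-r,w,y)|b(w)|dw$ via (\ref{cond:|b|}) at the base point $y$). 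Summing the four contributions factors cleanly as $C_b^2\bigl[u^{1/\alpha-1}+(t-u)^{1/\alpha-1}\bigr]\bigl[(r-u)^{1/\alpha-1}+(t-r)^{1/\alpha-1}\bigr]$, and substituting back produces exactly the integral handled by Lemma~\ref{lem:aux2}, which provides a finite constant $C_5$ and yields the required majorization by $C\,p(t,x,y)$.

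The main obstacle I anticipate is choosing the correct pairing for the two 3P applications. The apparently natural grouping, which chains first through the intermediate point $w$ at time $r$ (so that $p(u,x,\xi)p(r-u,\xi,w)$ is combined via 3P to give $p(r,x,w)$), produces factors involving $r^{1/\alpha-1}$ that do not fit the structure of Lemma~\ref{lem:aux2}. The asymmetry between the two time ranges $u<t/2$ and $r>t/2$ is precisely what singles out $t-u$ as the correct intermediate time and explains the particular shape of the auxiliary integrand in Lemma~\ref{lem:aux2}. Once the pairing is fixed, the remaining steps are routine applications of (\ref{cond:|b|}) and Fubini.
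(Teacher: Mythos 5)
Your proof is correct and takes essentially the same route as the paper: bound the mixed second derivative via Lemma~\ref{lem:aux1}, chain $w$ to produce $p(t-u,\xi,y)$ and then $\xi$ to produce $p(t,x,y)$ (the paper does this through the single intermediate estimate $\int p(s,x,z)|b(z)|p(t,z,y)\,dz \le c\,p(s+t,x,y)(s^{1/\alpha-1}+t^{1/\alpha-1})$, applied twice, which is exactly your two 3P applications followed by integration against $|b|$), and finish with Lemma~\ref{lem:aux2}. One small caveat: your remark that the alternative pairing through $p(r,x,w)$ fails is overstated — that pairing yields $(r-u)^{-2/\alpha}[u^{1/\alpha-1}+(r-u)^{1/\alpha-1}][r^{1/\alpha-1}+(t-r)^{1/\alpha-1}]$, which after the same algebraic manipulation $a^{-p}+b^{-p}\le 2^{1-p}(a+b)^p(ab)^{-p}$ with $p=1-1/\alpha$ reduces to the identical integrand $(r-u)^{-1/\alpha-1}(t-r)^{1/\alpha-1}u^{1/\alpha-1}t^{1-1/\alpha}$; it only fails to match the \emph{stated} form of Lemma~\ref{lem:aux2}, not to converge.
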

\begin{proof}
By Lemma \ref{lem:3P} and (\ref{cond:|b|})
\begin{align*}
\int_{\RR^d} p(s,x,z) |b(z)| p(t,z,y)\,dz \le c p(s+t,x,y) (s^{1/\alpha-1} + t^{1/\alpha-1})\,.
\end{align*}
Hence by Lemma \ref{lem:aux1} we have
\begin{align*}
&\int_{\RR^d}\int_{\RR^d} p(u,x,\xi) \big|\nabla_w\big(b(\xi) \cdot \nabla_\xi p(r-u,\xi,w) \big) \cdot b(w)\big| p(t-r,w,y) \,dw\,d\xi\\
& \le \int_{\RR^d}\int_{\RR^d}   p(u,x,\xi) |b(\xi)| (r-u)^{-2/\alpha} p(r-u,\xi,w) |b(w)| p(t-r,w,y) \,dw\,d\xi \\
& \le \int_{\RR^d}   |b(\xi)| (r-u)^{-2/\alpha} p(u,x,\xi)p(t-u,\xi,y) ((t-r)^{1/\alpha-1} + (r-u)^{1/\alpha-1}) \,dw \\
& \le p(t,x,y) (r-u)^{-2/\alpha}((t-r)^{1/\alpha-1} + (r-u)^{1/\alpha-1}) ((t-u)^{1/\alpha-1}+u^{1/\alpha-1})\,.
\end{align*}
Now (\ref{Eq:p2_est}) follows from Lemma \ref{lem:aux2}.
\end{proof}

\section{Perturbation series}

In this section we introduce functions $|p|_n$ which will be majorants of the functions $p_n$ (see (\ref{def:p_n})). For any $a <b$ and $n \ge 1$ we denote
$$
S_n(a,b) = \{(s_1,s_2,\ldots,s_n) \in \RR^n \colon a \le s_1 \le s_2 \le \ldots \le s_n \le b\}\,.
$$
For any $t>0$ and $x,y \in \RR^d$ let
\begin{equation}\label{def:p_n_ext}
P_n(t,x,y,\u{s},\u{z}) = p(s_1,x,z_1)b(z_1) \cdot \nabla_{z_1} p(s_2-s_1,z_1,z_2) \ldots b(z_n) \cdot \nabla_{z_n}p(t-s_n,z_n,y)\,,
\end{equation}
where $\u{s}=(s_1,\ldots,s_n) \in S_n(0,t)$ and $\u{z} = (z_1,\ldots,z_n) \in (\RR^d)^n$\,.
We recall that the integrals in (\ref{def:p_n}) may not be absolutely convergent as the integrals over $[0,t] \times \RR^d$, and the approach from the paper \cite{2007-KB-TJ-cmp} cannot be used. Therefore we separate integrals over time and space in the following definition.
\begin{df}
For any $t>0$ and $x,y \in \RR^d$ we define
\begin{align}
|p|_0(t,x,y) &= p(t,x,y) \label{def:|p|_0} \\
|p|_n(t,x,y) &=\int_{S_n(0,t)} \left|\int_{(\RR^d)^n} P_n(t,x,y,\u{s},\u{z}) d\u{z} \right| d\u{s}.  \label{def:|p|_n}
\end{align}
\end{df}

\noindent We note that by Lemma \ref{lem:3P}, (\ref{cond:|b|}) and (\ref{eq:grad_p_est}) 
\begin{equation}\label{eq:P_n}
\int_{(\RR^d)^n} |P_n(t,x,y,\u{s},\u{z})| d\u{z} < \infty,
\end{equation}
hence the functions $|p|_n$ are well-defined (at most they are equal to infinity). However the integral $\int_{S_n(0,t)} \int_{(\RR^d)^n} \left|P_n(t,x,y,\u{s},\u{z})\right| d\u{z}  d\u{s}$ may not be convergent because singularities of the gradient of the functions $p$ in (\ref{def:p_n_ext}) may be not integrable in the whole simplex $S_n(0,t)$. Therefore in order to estimate (\ref{def:|p|_n}) we use the following representation 
\begin{align}
S_n(0,t) = S_n(0,t/2) \cup \left(\bigcup_{k=1}^{n-1} S_{n-k}(0,t/2) \times S_k(t/2,t) \right) \cup S_n(t/2,t) \label{eq:S}\,,
\end{align}
Lemma \ref{lem:Move_grad} and (\ref{eq:move_grad}) to move these singularities off the region of integration.  
\begin{lem}\label{lem:|p|_m_est}
For any $1 \le k \le n-1$, $t>0$, and $x,y \in \RR^d$ we have
\begin{align}
&\int_{S_{n-k}(0,t/2)\times S_k(t/2,t)} \bigg| \int_{(\RR^d)^n} P_n(t,x,y,\u{s},\u{z}) \,d\u{z}\bigg|\,d\u{s} \nonumber\\
\le &\int_0^{t/2}\int_{t/2}^t \int_{\RR^d}\int_{\RR^d} \,dw\,d\xi \label{eq:|p|_m_est}\\
& |p|_{n-k-1}(u,x,\xi) \big|\nabla_w\big(b(\xi) \cdot \nabla_\xi p(r-u,\xi,w) \big) \cdot b(w)\big| |p|_{k-1}(t-r,w,y) \,dr\,du\,. \nonumber
\end{align} 
\end{lem}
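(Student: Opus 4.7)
The plan is to identify the two hinge times $u:=s_{n-k}$ and $r:=s_{n-k+1}$ that straddle $t/2$, together with the adjacent spatial variables $\xi:=z_{n-k}$ and $w:=z_{n-k+1}$, and use them to split $P_n$ into three blocks. With these labels, inspection of (\ref{def:p_n_ext}) shows that the head block (factors through gradient index $n-k-1$) equals
$$
P_{n-k-1}\bigl(u,x,\xi,(s_1,\ldots,s_{n-k-1}),(z_1,\ldots,z_{n-k-1})\bigr),
$$
the middle consists of the two adjacent gradient factors
$$
[b(\xi)\cdot\nabla_\xi p(r-u,\xi,w)]\cdot[b(w)\cdot\nabla_w p(s_{n-k+2}-r,w,z_{n-k+2})],
$$
and the tail is the remaining $k-2$ gradient factors in $z_{n-k+2},\ldots,z_n,y$. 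In parallel, the outer simplex $S_{n-k}(0,t/2)\times S_k(t/2,t)$ decomposes as $du\,dr$ over $(0,t/2)\times(t/2,t)$ together with $\u{s}'\in S_{n-k-1}(0,u)$ and $\u{s}''\in S_{k-1}(r,t)$.

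The crux is to apply (\ref{eq:move_grad}) in the variable $w$ to the middle block. With $\xi$ and $z_{n-k+2}$ (and everything else) frozen, the middle is exactly the left-hand side of (\ref{eq:move_grad}) with $z_{n-k+2}$ playing the role of $y$, so the identity produces
$$
-\nabla_w[b(\xi)\cdot\nabla_\xi p(r-u,\xi,w)]\cdot b(w)\cdot p(s_{n-k+2}-r,w,z_{n-k+2}).
$$
The (now gradient-less) factor $p(s_{n-k+2}-r,w,z_{n-k+2})$, prepended to the tail and reparametrized by $\sigma_j:=s_{j+n-k+1}-r$ for $j=1,\ldots,k-1$, matches $P_{k-1}(t-r,w,y,\u{\sigma},(z_{n-k+2},\ldots,z_n))$ on the simplex $\u{\sigma}\in S_{k-1}(0,t-r)$ with unit Jacobian.

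To assemble, for each fixed $\u{s}$ use the absolute convergence from (\ref{eq:P_n}) and Fubini to legitimize the rewriting
$$
\int P_n\,d\u{z} = -\int d\xi\,dw\,\bigg[\int P_{n-k-1}\,d\u{z}'\bigg]\cdot\nabla_w[b(\xi)\cdot\nabla_\xi p(r-u,\xi,w)]\cdot b(w)\cdot\bigg[\int P_{k-1}\,d\u{z}''\bigg],
$$
then push the outer $|\cdot|$ inside the $d\xi\,dw$ integral via the triangle inequality — legitimate because the three bracketed pieces depend on disjoint blocks of $\u{z}$-variables — and invoke Tonelli on the nonnegative result to carry the $d\u{s}'$ and $d\u{s}''$ integrations inside. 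The two bracketed expressions then become exactly $|p|_{n-k-1}(u,x,\xi)$ and $|p|_{k-1}(t-r,w,y)$, which is the claimed bound.

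The only genuinely delicate point will be the in-context use of (\ref{eq:move_grad}): it is stated for a single $w$-integral with $\xi,y$ as external parameters, whereas here that integral sits inside a multiple integral in $\u{z}$. This is handled by first freezing every variable except $w$ (permissible by the absolute convergence (\ref{eq:P_n})), applying (\ref{eq:move_grad}) pointwise with $z_{n-k+2}$ in the role of $y$, and then restoring the outer integrations. The remaining reindexing (the $\u{\sigma}$ shift) and sign-chasing (the minus from integration by parts being absorbed into the outer absolute value) are routine bookkeeping.
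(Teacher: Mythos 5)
Your proof follows essentially the same route as the paper's: use the absolute convergence (\ref{eq:P_n}) with Fubini to reorder the space integral, pull out the two hinge times straddling $t/2$, apply the integration-by-parts identity (\ref{eq:move_grad}) in the variable $w$ to move the gradient onto the hinge factor $b(\xi)\cdot\nabla_\xi p(r-u,\xi,w)$, and then regroup (with Tonelli for the inner time simplices, after shifting the tail times) into $|p|_{n-k-1}(u,x,\xi)$ and $|p|_{k-1}(t-r,w,y)$. The only blemish is the harmless miscount ``the remaining $k-2$ gradient factors'' --- there are $k-1$ of them --- which your own subsequent formula $P_{k-1}(t-r,w,y,\underline{\sigma},(z_{n-k+2},\ldots,z_n))$ already gets right.
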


\begin{proof}
By (\ref{eq:P_n}) and Fubini's theorem we may change the order of integration in integrals over $(\RR^d)^n$. We note that
\begin{equation}\label{eq1:|p|_m_est}
|p|_m(t-r,x,y) = \int_{S_m(r,t)} \left|\int_{(\RR^d)^m} P_m(t-r,x,y,\u{s},\u{z})\,d\u{z}\right|\,d\u{s}\,.
\end{equation}
Changing the order of variables in the following way,  
$$ 
\int_{S_m(a,b)} f(\u{s}) ds_m ds_{m-1} \ldots ds_1 = \int_{(a,b) \times S_{m-1}(a,s_m)} f(\u{s}) ds_{m-1} \ldots ds_1 ds_m\,,
$$
using (\ref{eq:move_grad}) and Fubini's theorem we get
\begin{align*}
&\int_{S_{n-k}(0,t/2)\times S_k(t/2,t)} \bigg| \int_{(\RR^d)^n} P_n(t,x,y,\u{s},\u{\xi}) \,d\u{\xi}\bigg|\,d\u{s} \\
& = \int_0^{t/2}\int_{ S_{n-k-1}(0,r_{n-k})} \int_{t/2}^t \int_{S_{k-1}(u_k,t)} d\u{u}\, du_k\, d\u{r}\,dr_{n-k}\\ 
&  \bigg| \int_{(\RR^d)^n} P_{n-k-1}(r_{n-k},x,z_{n-k},\u{r},\u{z}) b(z_{n-k})\cdot \nabla_{z_{n-k}} p(u_1-r_{n-k},z_{n-k}, w_1)\\
&   b(w_1)\cdot\nabla_{w_1} P_{k-1}(t-u_1,w_1,y,\u{u},\u{w}) \,d\u{w} \,dw_1\, d\u{z}\,dz_{n-k} \bigg| \\
& = \int_0^{t/2}\int_{ S_{n-k-1}(0,r_{n-k})} \int_{t/2}^t \int_{S_{k-1}(u_k,t)} d\u{u}\, du_k\, d\u{r}\,dr_{n-k}\\ 
&  \bigg| \int_{(\RR^d)^n} P_{n-k-1}(r_{n-k},x,z_{n-k},\u{r},\u{z}) P_{k-1}(t-u_1,w_1,y,\u{u},\u{w}) \\
&   b(w_1)\cdot\nabla_{w_1} [b(z_{n-k})\cdot \nabla_{z_{n-k}} p(u_1-r_{n-k},z_{n-k}, w_1)] \,d\u{w} \,dw_1\, d\u{z}\,dz_{n-k}  \bigg|\,,
\end{align*}
where $\u{r}= (r_1,\dots,r_{n-k-1})$, $\u{u}= (u_1,\dots,u_{k-1})$, $\u{z}= (z_1,\dots,z_{n-k-1})$, $\u{w}= (w_2,\dots,w_k)$. Now splitting integral over $(\RR^d)^n$ into integrals over $(\RR^d)^{n-k-1}$, $(\RR^d)^{k-1}$, and $(\RR^d)^2$ and applying (\ref{eq1:|p|_m_est}), we get (\ref{eq:|p|_m_est})
\end{proof}

\begin{lem}\label{lem:|p|_well_def}
For $n \ge 2$, $t>0$ and $x,y \in \RR^d$ we have 
\begin{align*}
|p|_n(t,x,y) &\le \int_0^{t/2} \int_{\RR^d} |p|_{n-1}(u,x,z) |b(z)\cdot \nabla_z p(t-u,z,y)|\,dz \,du\\
& + \int_{t/2}^t \int_{\RR^d} |\nabla_z p(u,x,z)\cdot b(z)| |p|_{n-1}(t-u,z,y)\,dz \,du \\
&+ \sum_{k=0}^{n-2} \int_0^{t/2} \int_{t/2}^t \int_{\RR^d}\int_{\RR^d} \,dw\,dz\,dr\,du |p|_k(u,x,z) \times \\
& \hskip1cm \times \big|b(z) \cdot\nabla_z (b(w) \cdot\nabla_w p(r-u,z,w))\big| |p|_{n-2-k}(t-r,w,y)\,.  
\end{align*}
\end{lem}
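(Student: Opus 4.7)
The plan is to apply the decomposition (\ref{eq:S}) of $S_n(0,t)$ and bound $|p|_n(t,x,y)$ on each of the $n+1$ resulting pieces, matching them with the $n+1$ terms on the right-hand side of the conclusion. The $n-1$ middle pieces $S_{n-k}(0,t/2)\times S_k(t/2,t)$, $k=1,\ldots,n-1$, are handled directly by Lemma \ref{lem:|p|_m_est}; after the reindexing $k\mapsto n-1-k$ the resulting bounds line up exactly with the sum $\sum_{k=0}^{n-2}$ in the statement, so no new argument is needed for the middle pieces.

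For the piece $S_n(0,t/2)$ I factor
$$P_n(t,x,y,\u{s},\u{z}) = P_{n-1}\bigl(s_n,x,z_n,(s_1,\ldots,s_{n-1}),(z_1,\ldots,z_{n-1})\bigr)\,b(z_n)\cdot\nabla_{z_n}p(t-s_n,z_n,y),$$
reorder the time integrations so that $s_n\in(0,t/2)$ is the outermost variable, push the triangle inequality through the $z_n$-integral while keeping the $(z_1,\ldots,z_{n-1})$-integration \emph{inside} the absolute value (this is where the cancellation defining $|p|_{n-1}$ lives), and use Fubini--Tonelli --- legitimate because the integrand is non-negative after the triangle inequality and because (\ref{eq:P_n}) bounds the inner $z$-integral --- to recognise the resulting bracket as $|p|_{n-1}(s_n,x,z_n)$. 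No integration by parts is required here, because $t-s_n\ge t/2$ keeps the factor $(t-s_n)^{-1/\alpha}$ in (\ref{eq:grad_p_est}) bounded. For the piece $S_n(t/2,t)$ the analogous plan puts $s_1\in(t/2,t)$ outermost, but I first apply Lemma \ref{lem:Move_grad} in the $z_1$-variable (pointwise in $s_2,\ldots,s_n,z_2,\ldots,z_n$, with the Fubini interchange justified by (\ref{eq:P_n})) to transfer $\nabla_{z_1}$ from the small-time factor $p(s_2-s_1,z_1,z_2)$ onto the large-time factor $p(s_1,x,z_1)$. After this transfer the substitution $\tau_i=s_i-s_1$ recasts the remaining simplex integral as $|p|_{n-1}(t-s_1,z_1,y)$, producing the second term of the conclusion.

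The main subtlety will be the bookkeeping of absolute values: the triangle inequality must be pushed through exactly one spatial variable (the outermost one, namely $z_n$ in the first piece and $z_1$ in the third) and no further, because otherwise the pointwise non-integrability noted around (\ref{eq:notconv}) reappears inside the remaining spatial integrations and the bound blows up. Once this is arranged and, for the third piece, the small-time gradient is moved off via Lemma \ref{lem:Move_grad}, the three contributions combine to give the stated inequality.
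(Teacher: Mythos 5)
Your proposal is correct and follows essentially the same route as the paper: the decomposition (\ref{eq:S}), the middle pieces handled by Lemma \ref{lem:|p|_m_est} (up to the harmless reindexing and the equality of the mixed second-order expressions), the $S_n(0,t/2)$ piece by pulling the triangle inequality through only the outermost spatial variable and recognizing $|p|_{n-1}$, and the $S_n(t/2,t)$ piece by first transferring the gradient via Lemma \ref{lem:Move_grad} and arguing symmetrically. No substantive difference from the paper's proof.
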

\begin{proof}
By (\ref{eq:S}) we get 
\begin{align}
|p|_n(t,x,y) &= \int_{S_n(0,t/2)} \bigg| \int_{(\RR^d)^n} P_n(t,x,y,\u{s},\u{\xi}) \,d\u{\xi}\bigg|\,d\u{s}  \label{eq1:|p|_well_def}\\
& + \int_{S_n(t/2,t)} \bigg| \int_{(\RR^d)^n} P_n(t,x,y,\u{s},\u{\xi}) \,d\u{\xi}\bigg|\,d\u{s} \label{eq3:|p|_well_def} \\
&+ \int_{\bigcup_{k=1}^{n-1} S_{n-k}(0,t/2) \times S_k(t/2,t)  } \bigg| \int_{(\RR^d)^n} P_n(t,x,y,\u{s},\u{\xi}) \,d\u{\xi}\bigg|\,d\u{s} \label{eq2:|p|_well_def}
\end{align}
The integral (\ref{eq1:|p|_well_def}) is estimated as follows,
\begin{align*}
& \int_{S_n(0,t/2)} \bigg| \int_{(\RR^d)^n} P_n(t,x,y,\u{s},\u{\xi}) \,d\u{\xi}\bigg|\,d\u{s} \\
& = \int_0^{t/2} \int_{S_{n-1}(0,s_n)} \bigg| \int_{(\RR^d)^n} P_{n-1}(s_n,x,\xi_n,\u{s}^*,\u{\xi}^*) b(\xi_n)\cdot\nabla_{\xi_n} p(t-s_n,\xi_n,y) \,d\u{\xi}\bigg|\,d\u{s} \\
& \le \int_0^{t/2} \int_{S_{n-1}(0,s_n)} \int_{\RR^d} \,d\xi_n \,d\u{s}^*\,ds_n  \times \\
& \times  |b(\xi_n)\nabla_{\xi_n} p(t-s_n,\xi_n,y)| \bigg|\int_{(\RR^d)^{n-1}} P_{n-1}(s_n,x,\xi_n,\u{s}^*,\u{\xi}^*)  \,d\u{\xi}^*\bigg| \\
& \le  \int_0^{t/2}  \int_{\RR^d} |p|_{n-1}(s_n,x,\xi_n) |b(\xi_n) \cdot \nabla_{\xi_n} p(t-s_n,\xi_n,y)| \,d\xi_n\,ds_n\,,
\end{align*}
where $\u{s}^* = (s_1,\ldots s_{n-1})$ and $\u{\xi}^* = (\xi_1,\ldots \xi_{n-1})$. 
Applying Lemma \ref{lem:Move_grad} and using similar method we estimate (\ref{eq3:|p|_well_def}). Next, we split (\ref{eq2:|p|_well_def}) into $n-1$ integrals over the sets $S_{n-k}(0,t/2)\times S_k(t/2,t)$ and apply Lemma \ref{lem:|p|_m_est} to each integral.
\end{proof}

By the lemmas from the previous section and induction we will obtain that all functions $|p|_n$ are finite and in consequence the functions $p_n$ are well defined. Detailed estimates will we given in the next section. 

\section{Proof of Theorem 1}
Before we pass to the proofs of the main theorem we briefly introduce the Motzkin numbers. In combinatorics Motzkin number $M_n$ represents the number of different ways of drawing non-intersecting chords on a circle between $n$ points (\cite{1948-TM-bams}). Their generating function is (see \cite{1999-RS-EnumerCombin})
\begin{equation}\label{eq:Motz_gen}
	M(x) = \sum_{n=0}^\infty M_n x^n = \frac{1-x-\sqrt{1-2x-3x^2}}{2x^2}\,,
\end{equation}
and the following recurrence relation holds,
\begin{equation}\label{eq:Motz_rec}
M_n = M_{n-1} +\sum_{k=0}^{n-2} M_k M_{n-2-k}\,, \qquad M_0 = M_1 = 1\,.	
\end{equation}

We may now prove the main estimates of this paper
\begin{lem} \label{lem:|p|_n}
There is a constant $C$ such that for all $t>0$, $x,y \in \RR^d$ and $n \ge 1$,
\begin{equation}\label{Eq:|p|_n}
|p|_n(t,x,y) \le M_n C^n p(t,x,y)\,.
\end{equation}
\end{lem}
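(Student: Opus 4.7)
The plan is to prove the estimate by induction on $n$, with the base case $n=1$ being Lemma \ref{lem:|p|_1} (which yields $|p|_1 \le C_* p$ for some constant $C_*$), and the inductive step driven by the decomposition in Lemma \ref{lem:|p|_well_def} together with the Motzkin recurrence (\ref{eq:Motz_rec}). I will choose the constant $C$ in the statement to simultaneously majorize the base-case constant and the constants produced by the three families of terms below.

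Assume the bound holds for all indices $\le n-1$. Applying Lemma \ref{lem:|p|_well_def} splits $|p|_n$ into three types of contributions. For the first, write $u\in(0,t/2)$ so that $t-u\ge t/2$, use the inductive hypothesis $|p|_{n-1}(u,x,z)\le M_{n-1}C^{n-1}p(u,x,z)$, bound $|b(z)\cdot\nabla_z p(t-u,z,y)|$ by $C_3(t-u)^{-1/\alpha}|b(z)|p(t-u,z,y)$ via (\ref{eq:grad_p_est}), apply the 3P Lemma \ref{lem:3P} to $p(u,x,z)p(t-u,z,y)$, and absorb $|b|$-integrals by (\ref{cond:|b|}). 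An elementary integration over $(0,t/2)$ of the resulting factor $(t-u)^{-1/\alpha}[u^{1/\alpha-1}+(t-u)^{1/\alpha-1}]$ is bounded (since $t-u\asymp t$ there) by an absolute constant. This yields a bound of the form $C_a M_{n-1}C^{n-1}p(t,x,y)$ for the first term. The second term is estimated identically, giving the same bound $C_a M_{n-1}C^{n-1}p(t,x,y)$.

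For the third contribution, the sum over $k=0,\dots,n-2$, apply the inductive hypothesis to both $|p|_k(u,x,z)\le M_k C^k p(u,x,z)$ and $|p|_{n-2-k}(t-r,w,y)\le M_{n-2-k}C^{n-2-k}p(t-r,w,y)$, then insert the bound of Lemma \ref{lem:p2_est} to control the remaining space-time integral by $C_L p(t,x,y)$. Each summand is then at most $M_k M_{n-2-k}C^{n-2}C_L\, p(t,x,y)$. Collecting all three pieces gives
\begin{equation*}
|p|_n(t,x,y)\le\Bigl[2C_a M_{n-1}C^{n-1}+C_L C^{n-2}\sum_{k=0}^{n-2}M_k M_{n-2-k}\Bigr]p(t,x,y).
\end{equation*}

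Finally, choose $C$ so that $C\ge C_*$, $2C_a\le C$, and $C_L\le C^2$. Then the bracketed expression is bounded by $C^n[M_{n-1}+\sum_{k=0}^{n-2}M_k M_{n-2-k}]$, which equals $M_n C^n$ by the Motzkin recurrence (\ref{eq:Motz_rec}). This closes the induction. I do not expect any serious obstacle here, as Lemmas \ref{lem:|p|_well_def} and \ref{lem:p2_est} are precisely the structural inputs that mirror the two terms on the right-hand side of the Motzkin recurrence; the only care needed is to verify that all time-singularity integrations on the $(0,t/2)$ and $(t/2,t)$ pieces really do produce dimensionless constants rather than extra factors of $t$, which they do because each singular power $s^{1/\alpha-1}$ in (\ref{cond:|b|}) is exactly compensated by the $(t-u)^{-1/\alpha}$ arising from (\ref{eq:grad_p_est}).
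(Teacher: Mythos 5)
Your proof is correct and mirrors the paper's argument almost exactly: induction with Lemma \ref{lem:|p|_1} for the base case, Lemma \ref{lem:|p|_well_def} to split $|p|_n$ into three contributions, the inductive hypothesis plus (\ref{eq:grad_p_est}), Lemma \ref{lem:3P}, (\ref{cond:|b|}) for the two boundary pieces and Lemma \ref{lem:p2_est} for the interior sum, followed by the Motzkin recurrence (\ref{eq:Motz_rec}) to close. The only cosmetic difference is that you track the first two terms with a separate constant $2C_a$ and impose $C\ge C_*$, $2C_a\le C$, $C_L\le C^2$, whereas the paper bounds the sum of those two integrals by a single constant $c_1$ and takes $C=c_1\vee\sqrt{c_2}$ (which, since $c_1$ is also the constant from Lemma \ref{lem:|p|_1}, covers the base case automatically).
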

\begin{proof}
Let $c_1$ be the constant such that (see the proof of Lemma  \ref{lem:|p|_1})
\begin{align}
& \int_0^{t/2}  \int_{\RR^d} p(s,x,z) |b(z) \cdot \nabla_z p(t-s,z,y)|\, dz\, ds   \nonumber\\
& + \int_{t/2}^t \int_{\RR^d}   |\nabla_z p(s,x,z) \cdot  b(z)|   p(t-s,z,y)\, dz\, ds  \le c_1 p(t,x,y)\,. \label{eq:|p|_n}
\end{align}
Let $C = c_1 \vee \sqrt{c_2}$, where $c_2$ is the constant from Lemma \ref{lem:p2_est}.  

We use induction. For $n=1$ we apply Lemma \ref{lem:|p|_1}. Suppose (\ref{Eq:|p|_n}) holds for $n=1,\ldots k-1$. By Lemma \ref{lem:|p|_well_def} we get
\begin{align*}
|p|_k(t,x,y) &\le C^{k-1} M_{k-1} \int_0^{t/2} \int_{\RR^d} p(u,x,z) |b(z)\cdot \nabla_z p(t-u,z,y)|\,dz \,du\\
& + C^{k-1} M_{k-1} \int_{t/2}^t \int_{\RR^d} |\nabla_z p(u,x,z)\cdot b(z)| p(t-u,z,y)\,dz \,du  \\
&+ \sum_{j=0}^{k-2} C^j M_j C^{k-2-j} M_{k-2-j} 
 \int_0^{t/2} \int_{t/2}^t\int_{\RR^d}\int_{\RR^d} \,dw\,dz \,dr\,du \\
& p(u,x,z) \big|b(z) \cdot\nabla_z (b(w) \cdot\nabla_w p(r-u,z,w))\big|\times  
  p(t-r,w,y)\,.  
\end{align*}
Now by  (\ref{eq:|p|_n}), Lemma \ref{lem:p2_est} and (\ref{eq:Motz_rec}) we obtain
\begin{align*}
|p|_k(t,x,y) & \le \left(C^{k-1} c_1 M_{k-1} +  \sum_{j=0}^{k-2} C^j M_j C^{k-2-j} M_{k-2-j} c_2\right)p(t,x,y) \\
& \le C^k \left(M_{k-1} +\sum_{j=0}^{k-2} M_j M_{k-2-j}\right)p(t,x,y) = C^kM_k p(t,x,y)\,.
\end{align*}
\end{proof}
For all $n \in \NN$ let $p_n$ be functions satisfying (\ref{def:p_1}) and (\ref{def:p_n}).
\begin{cor}\label{cor:p_n}
Functions $p_n$ are well defined and there is a constant $C$ such that for all $t>0$, $x,y \in \RR^d$ and $n \ge 1$ 
\begin{equation}\label{Eq:p_n}
|p_n(t,x,y)| \le M_n C^n p(t,x,y)\,.
\end{equation}
\end{cor}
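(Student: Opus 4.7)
The plan is to prove the corollary by induction on $n$, with Lemma \ref{lem:|p|_n} supplying the main estimate. The strategy is to show that each $p_n$, defined by the recursion (\ref{def:p_n}), coincides with the iterated integral
\begin{equation*}
p_n(t,x,y) \;=\; \int_{S_n(0,t)} \left(\int_{(\RR^d)^n} P_n(t,x,y,\u{s},\u{z})\,d\u{z}\right) d\u{s},
\end{equation*}
where the $(\RR^d)^n$--integration is performed first for each fixed $\u{s}$, and the simplex time integration is performed second. Once this representation is established, the triangle inequality together with Lemma \ref{lem:|p|_n} yields directly
\begin{equation*}
|p_n(t,x,y)| \;\le\; \int_{S_n(0,t)}\left|\int_{(\RR^d)^n} P_n\,d\u{z}\right|d\u{s} \;=\; |p|_n(t,x,y) \;\le\; M_n C^n p(t,x,y).
\end{equation*}

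The base case $n=1$ is exactly Lemma \ref{lem:|p|_1}, since $M_1=1$ and the change of variables $s_1 := t-s$ identifies the recursive form of $p_1$ with $\int_{S_1(0,t)}\int_{\RR^d} P_1\,dz_1\,ds_1$. For the inductive step, I assume $p_{n-1}$ admits the simplex representation and satisfies $|p_{n-1}|\le M_{n-1}C^{n-1}p$. Substituting the representation of $p_{n-1}$ into (\ref{def:p_n}) and setting $s_n := t-s$ and $z_n := z$ as the new outermost simplex time and the last space variable, I will apply Fubini \emph{pointwise in $s_n$} to combine the inner $(\RR^d)^{n-1}$--integral of $P_{n-1}$ with the $z_n$--integral. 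This pointwise swap is legal because for each fixed $s_n\in(0,t)$ the integrand $|P_n|$ is integrable on $(\RR^d)^n$ by (\ref{eq:P_n}) applied to $P_{n-1}$ combined with the bound $\int_{\RR^d}p(s_n,x,z_n)|b(z_n)||\nabla_{z_n}p(t-s_n,z_n,y)|\,dz_n<\infty$ from (\ref{eq:inner1}). The outer integration over $S_n(0,t)$ is then absolutely convergent by Lemma \ref{lem:|p|_n}, which simultaneously shows that $p_n$ is well defined and closes the induction.

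The main obstacle is that the full integral of $|P_n|$ over $S_n(0,t)\times(\RR^d)^n$ is in general infinite: the crude absolute-value bound produces the non-integrable singularity $s^{-1/\alpha}\cdot s^{1/\alpha-1}=s^{-1}$ of the type discussed around (\ref{eq:notconv}). Hence Fubini cannot be applied globally to exchange the space and time integrations. The workaround is to apply Fubini only among the space variables, at each fixed time slice, where absolute convergence does hold, and to control the outer simplex time integration \emph{separately} via the finiteness of $|p|_n$ established in Lemma \ref{lem:|p|_n}. This careful bookkeeping of the order of integration, rather than any additional estimate, is the conceptual heart of the argument.
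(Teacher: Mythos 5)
Your proposal takes essentially the same route as the paper: prove by induction the iterated representation
\begin{equation*}
p_n(t,x,y) = \int_{S_n(0,t)}\left(\int_{(\RR^d)^n} P_n(t,x,y,\u{s},\u{z})\,d\u{z}\right)d\u{s}\,,
\end{equation*}
then read off $|p_n|\le |p|_n\le M_nC^np$ from Lemma \ref{lem:|p|_n}. The inductive Fubini step and the parametrization of $S_n(0,t)$ by an outermost last time are the same as in the paper's proof.

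There is, however, a misplacement in your Fubini justification that is worth fixing, because it is precisely the delicate point of the argument. After substituting the inductive representation of $p_{n-1}(s_n,x,z_n)$ into (\ref{def:p_n}), the integration order from inside out is $d\u{z}^*,\ d\u{s}^*,\ dz_n,\ ds_n$; to reach the target order $d\u{z}^*dz_n,\ d\u{s}^*,\ ds_n$ you must push $dz_n$ past $d\u{s}^*$. That requires, for fixed $s_n$, finiteness of
\begin{equation*}
\int_{\RR^d}\int_{S_{n-1}(0,s_n)}\Big|\int_{(\RR^d)^{n-1}}P_{n-1}\,d\u{z}^*\Big|\,|b(z_n)|\,|\nabla_{z_n}p(t-s_n,z_n,y)|\,d\u{s}^*\,dz_n\,,
\end{equation*}
and the two facts you cite there — (\ref{eq:P_n}) and (\ref{eq:inner1}) — do not control the inner $d\u{s}^*$ integral over the simplex. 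The ingredient that closes this is exactly Lemma \ref{lem:|p|_n}: the $\u{s}^*$-integral of the inner absolute value is $|p|_{n-1}(s_n,x,z_n)\le M_{n-1}C^{n-1}p(s_n,x,z_n)$, which together with (\ref{eq:inner1}) gives the required finiteness. You do invoke Lemma \ref{lem:|p|_n}, but only for the ``outer'' integration over $S_n(0,t)$; it is already needed to legitimize the pointwise-in-$s_n$ Fubini. This is what the paper's estimate (\ref{eq1:cor:p_n}) does, citing Lemmas \ref{lem:|p|_n}, \ref{lem:3P} and \ref{lem:grad_p} at that step.

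Minor remark: the base case $n=1$ of the representation (\ref{eq:p_n}) is the definition of $p_1$ itself (up to the substitution $s\mapsto t-s$); Lemma \ref{lem:|p|_1} gives the bound but is not needed for the identity.
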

\begin{proof}
We simultaneously prove the estimates of $p_n$ and that they are well defined.
It suffices to show that for $n \ge 1$
\begin{equation}\label{eq:p_n}
p_n(t,x,y) = \int_{S_n(0,t)}\int_{(\RR^d)^n} P_n(t,x,y,\u{s},\u{z})\,d\u{z}\,d\u{s}, \quad t>0,\; x,y \in \RR^d\,.
\end{equation}
We use induction. For $n=1$ (\ref{eq:p_n}) matches the definition of $p_1$. Suppose (\ref{eq:p_n}) holds for $n \in \NN$. By Lemmas \ref{lem:|p|_n}, \ref{lem:3P} and \ref{lem:grad_p} be have 
\begin{align}
& \int_{\RR^d}\int_{S_n(0,t-u)}\left|\int_{(\RR^d)^n} P_n(t-u,x,\xi,\u{s},\u{z})\,d\u{z}\right| |b(\xi)| | \nabla_\xi p(u,\xi,y)| d\u{s}  d\xi \notag\\
& \le c\int_{\RR^d} p(t-u,x,\xi)  |b(\xi)| u^{-1/\alpha} p(u,\xi,y)| d\xi \notag \\
& \le c ((t-u)^{1/\alpha-1} + u^{1/\alpha-1}) u^{-1/\alpha} p(t,x,y). \label{eq1:cor:p_n}
\end{align}
Therefore by Fubini's theorem and (\ref{eq1:cor:p_n}) 
\begin{align*}
 &p_{n+1}(t,x,y) \\
= &\int_0^t \int_{\RR^d} p_n(t-u,x,\xi) b(\xi)\cdot \nabla_\xi p(u,\xi,y) d\xi du \\
= &\int_0^t \int_{\RR^d} \int_{S_n(0,t-u)}\int_{(\RR^d)^n} P_n(t-u,x,\xi,\u{s},\u{z})\,d\u{z}\,d\u{s} b(\xi)\cdot \nabla_\xi p(u,\xi,y) d\xi du\\
= &\int_0^t  \int_{S_n(0,t-u)}\int_{\RR^d}\int_{(\RR^d)^n} P_n(t-u,x,\xi,\u{s},\u{z}) b(\xi)\cdot \nabla_\xi p(u,\xi,y)\,d\u{z}\, d\xi d\u{s} du\\
\le & \, |p|_{n+1}(t,x,y)\,,
\end{align*}
which ends the proof
\end{proof}

\begin{proof}[Proof of Theorem \ref{thm:main1}]
Let $C$ be the constant from Corollary \ref{cor:p_n}. Let $r$ be such that $rC < (\sqrt{5}-1)/4$. Then 
 \begin{equation}\label{eq:1}
|p_n(t,x,y)| \le M_n (rC)^n p(t,x,y)\,.
\end{equation}
Denote $\eta =rC$. We define $\tilde{p}(t,x,y)$ as
\begin{equation}\label{def:tp}
\tilde{p}(t,x,y) = \sum_{n=0}^\infty p_n(t,x,y)
\end{equation}
By (\ref{eq:1}) and (\ref{eq:Motz_gen}), the series converges, and
$$
\tilde{p}(t,x,y) \le \frac{1-\eta-\sqrt{1-2\eta-3\eta^2}}{2\eta^2} p(t,x,y)\,.
$$
Furthermore,
$$
\tilde{p}(t,x,y) \ge p(t,x,y)  - \sum_{n=1}^\infty|p|_n(t,x,y)  \ge  \frac{4\eta^2-1 +\eta+\sqrt{1-2\eta-3\eta^2}}{2\eta^2} p(t,x,y) 
$$
We next prove that for $\phi \in C_c^\infty(\RR,\RR^d)$, $s \in \RR$ and $x \in \RR^d$,
$$
\int_s^\infty \int_{\RR^d} \tilde{p}(u-s,x,z) (\partial_u\phi(u,z) + \Delta_{z}^{\alpha/2}\phi(u,z) + b(z) \cdot  \nabla_z \phi(u,z))dz du = -\phi(s,x).
$$
By (\ref{def:tp}) we get
\begin{align}
\tp(t,x,y) &= p(t,x,y) + \sum_{n=1}^\infty p_n(t,x,y) \notag\\
&=p(t,x,y) + \sum_{n=1}^\infty \int_0^t \int_{\RR^d}p_{n-1}(t-s,x,z)b(z)\cdot\nabla_zp(s,z,y)dzds \notag\\
&= p(t,x,y) + \int_0^t \int_{\RR^d}\tp(t-s,x,z)b(z)\cdot\nabla_zp(s,z,y)dzds \label{eq:Duhamel}
\end{align}
Here Fubini theorem is justified by similar arguments as in the proof of (\ref{eq:p_n}).
The rest of the proof is the same as in \cite[Theorem 1]{2010-TJ-KS-jee}
\end{proof}
\begin{cor}\label{cor:gen}
\rm The function $\tilde{p}$ satisfies the Chapman-Kolmogorov equation 
\begin{equation}\label{eq:Ch-K}
\int_{\RR^d} \tilde{p}(s,x,z)\tilde{p}(t,z,y)\,dz = \tilde{p}(t+s,x,y), \qquad s,t>0,\; x,y \in \RR^d, 
\end{equation}
and a family of operators
$$
\tilde{P}_t f(x) = \int_{\RR^d} \tilde{p}(t,x,y) f(y)\,dy 
$$
forms a Markov semigroup with a (weak) generator $\Delta^{\alpha/2} + b(x) \cdot \nabla_x$.
\end{cor}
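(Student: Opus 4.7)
The proof of Corollary \ref{cor:gen} consists of three independent tasks: the Chapman--Kolmogorov identity (\ref{eq:Ch-K}), Markovianity of $\tilde{P}_t$, and identification of the weak generator.

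For Chapman--Kolmogorov, the plan is to prove by induction on $k$ the convolution identity
$$p_k(s+t,x,y)=\sum_{j=0}^{k}\int_{\Rd}p_j(s,x,z)\,p_{k-j}(t,z,y)\,dz\,,\qquad k\ge 0.$$
The case $k=0$ is the usual semigroup identity for $p$. For the inductive step, expand $p_k(s+t,x,y)$ via (\ref{def:p_n}), split the time interval at $u=t$, perform the change of variable $v=u-t$ on $(t,s+t)$, and use Chapman--Kolmogorov for $p$ inside the gradient factor by moving $\nabla_z$ under the $w$-integral (justified by (\ref{eq:grad_p_est})); on $(0,t)$ invoke the inductive hypothesis on $p_{k-1}(s+(t-u),x,z)$. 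Collecting yields the stated identity at level $k$. Summation in $k$ is legitimate because $|p_n|\le M_n\eta^n p$ with $\eta=rC<1$ from Corollary~\ref{cor:p_n}, and the double series $\sum_{m,n}\int p_m(s,x,z)p_n(t,z,y)\,dz$ converges absolutely, giving (\ref{eq:Ch-K}).

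For Markovianity, positivity of $\tp$ is immediate from the lower bound in (\ref{eq:tp_est}) and the semigroup property of $\tilde{P}_t$ follows from (\ref{eq:Ch-K}). It remains to check $\int_{\Rd}\tp(t,x,y)\,dy=1$, equivalently $\int_{\Rd}p_n(t,x,y)\,dy=0$ for each $n\ge 1$. Translation invariance of $p$ combined with the pointwise bound $|\nabla_z p|\le Cs^{-1/\alpha}p$ and dominated convergence give $\int_{\Rd}\nabla_z p(s,z,y)\,dy=\nabla_z\!\int_{\Rd}p(s,z,y)\,dy=0$. Fubini then permits swapping the $y$-integration inside $(s,z)$ in (\ref{def:p_n}), because
$$\int_0^t\!\!\int_{\Rd}\!\!\int_{\Rd}|p_{n-1}(t-s,x,z)|\,|b(z)|\,s^{-1/\alpha}\,p(s,z,y)\,dy\,dz\,ds$$
is finite by Corollary~\ref{cor:p_n}, (\ref{cond:|b|}) and the beta-type bound $\int_0^t s^{-1/\alpha}(t-s)^{1/\alpha-1}\,ds<\infty$. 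Hence $\int p_n(t,x,y)\,dy=0$.

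To identify the weak generator, insert the test function $\phi(u,z)=\psi(u)f(z)$ with $\psi\in C_c^\infty(\RR)$ and $f\in C_c^\infty(\Rd)$ into (\ref{eq}) with $s=0$. The bound $\tp\le Cp$ and compact support justify Fubini, and (\ref{eq}) becomes
$$\int_0^\infty\!\psi'(u)\tilde{P}_uf(x)\,du+\int_0^\infty\!\psi(u)\tilde{P}_u\bigl(\Delta^{\alpha/2}f+rb\cdot\nabla f\bigr)(x)\,du=-\psi(0)f(x).$$
Integration by parts in the first integral (using $\tilde{P}_uf(x)\to f(x)$ as $u\to 0^+$, which follows from $\tp\le Cp$ and continuity of the stable semigroup at $0$) cancels $-\psi(0)f(x)$ and leaves $\partial_u\tilde{P}_uf=\tilde{P}_u(\Delta^{\alpha/2}f+rb\cdot\nabla f)$ in the distributional sense on $(0,\infty)$. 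Letting $u\to 0^+$ identifies the weak generator as $L=\Delta^{\alpha/2}+rb\cdot\nabla$, as stated. The main obstacle across the three steps is organizing the Fubini swaps; in each instance the verification reduces, via Corollary~\ref{cor:p_n}, Lemma~\ref{lem:3P}, (\ref{eq:grad_p_est}) and (\ref{cond:|b|}), to the same beta integral in $s$ that already appears throughout Section~3.
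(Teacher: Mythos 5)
Your proposal splits naturally into three parts, and they fare differently against the paper.

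For Chapman--Kolmogorov the paper simply cites \cite[Lemmas 15, 16]{2010-TJ-KS-jee}, so your inductive convolution identity is additional content. It is correct in outline (the $k=1$ case and the split at $u=t$ with the change of variable $v=u-t$ work out), but the Fubini swaps you invoke hide the same conditional-convergence subtleties that motivate the entire $|p|_n$ machinery: the outer time integral in (\ref{def:p_n}) is not absolutely convergent, so interchanging $\int_0^t du$ with $\int_{\Rd} dw$ and $\sum_j$ requires the bound $\int_0^t |\int_{\Rd}\cdots dz|\,du \le |p|_n \le M_nC^n p$, not merely the pointwise bounds you list. For stochasticity, the paper works with the Duhamel formula (\ref{eq:Duhamel}) for $\tp$ directly, whereas you show $\int_{\Rd}p_n(t,x,y)\,dy=0$ termwise; these are essentially equivalent, and your justification via the beta integral $\int_0^t s^{-1/\alpha}(t-s)^{1/\alpha-1}\,ds$ is fine since integrating out $y$ removes the divergent $\int_0^t s^{-1}\,ds$ that plagues (\ref{eq:notconv}) for fixed $y$.

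The generator step is where there is a real gap. Plugging a product test function into (\ref{eq}) and integrating by parts correctly yields the distributional identity $\partial_u\tilde{P}_uf(x)=\tilde{P}_u(\Delta^{\alpha/2}f+rb\cdot\nabla f)(x)$ on $(0,\infty)$, so that
$$\int_{\Rd}\frac{\tilde{P}_tf-f}{t}\,g\,dx=\frac{1}{t}\int_0^t\int_{\Rd}\tilde{P}_u\bigl(\Delta^{\alpha/2}f+rb\cdot\nabla f\bigr)\,g\,dx\,du\,.$$
But "letting $u\to0^+$" to conclude requires proving $\int_{\Rd}\tilde{P}_u(b\cdot\nabla f)\,g\,dx\to\int_{\Rd}(b\cdot\nabla f)\,g\,dx$, and this is precisely the hard part since $b\cdot\nabla f$ is singular and unbounded. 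Writing it as $\int_{\Rd}(b\cdot\nabla f)(y)\,\tilde{P}_u^*g(y)\,dy$ with $\tilde{P}_u^*g(y)=\int_{\Rd}\tp(u,x,y)g(x)\,dx$, one needs to know that $\tilde{P}_u^*g\to g$ in a sense strong enough to pair with the merely $L^1$ function $b\cdot\nabla f$. The paper gets this from the observation that $\tp(t,x,y)=p^{(-b)}(t,y,x)$, so $\tilde{P}_u^*$ is the forward semigroup built from the admissible drift $-b$ (hence $\int_{\Rd}\tp(t,x,y)\,dx=1$ and $\tilde{P}_u^*g$ is uniformly bounded and converges pointwise), and then estimates $\bigl|I_2(t)-\int b\cdot\nabla f\,g\bigr|$ via the modulus of continuity of $\nabla f\cdot g$. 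Your sketch does not mention this adjoint observation and characterizes the remaining work as "organizing the Fubini swaps," which misidentifies where the difficulty lies: the issue is a nontrivial limit involving a singular coefficient, not an interchange of integrals.
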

\begin{proof}
For the proof of (\ref{eq:Ch-K}) see  \cite[Lemmas 15, 16]{2010-TJ-KS-jee}. By (\ref{eq:Duhamel}), (\ref{eq:tp_est}) and Fubini theorem 
\begin{align*}
\int_{\RR^d} \tp(t,x,y)dy  &= \int_{\RR^d} \left(p(t,x,y) + \int_0^t \int_{\RR^d}\tp(t-s,x,z)b(z)\cdot\nabla_zp(s,z,y)dzds \right)dy \\
&= 1 + \int_0^t \int_{\RR^d}\tp(t-s,x,z)b(z)\cdot\nabla_z \left(\int_{\RR^d} p(s,z,y) dy \right) dzds = 1\,.
\end{align*}
Now, let $f,g \in C_c^\infty(\RR^d)$. We will show that 
\begin{equation}
\lim_{t\to 0} \int_{\RR^d} \frac{\tilde{P}_t f(x) - f(x)}{t}\, g(x)\,dx = \int_{\RR^d} (\Delta^{\alpha/2}f(x) + b(x)\cdot \nabla f(x)) g(x)dx
\end{equation}
By (\ref{eq:Duhamel})
\begin{align*}
&\int_{\RR^d} \frac{\tilde{P}_t f(x) - f(x)}{t}\, g(x)\,dx \\
&= \int_{\RR^d}\int_{\RR^d} \frac{p(t,x,y)( f(y) - f(x))}{t}\,g(x)\,dy \,dx \\
&+ \frac{1}{t}\int_{\RR^d} \int_{\RR^d}\int_0^t \int_{\RR^d} \tp(t-s,x,z) b(z)\cdot \nabla_z p(s,z,y) f(y) g(x) \,dz\,ds\,dy\,dx \\
&= I_1(t) + I_2(t)\,.
\end{align*}
The first summand converges to $\int_{\RR^d} \Delta^{\alpha/2}f(x) g(x)dx$. By careful use of the Fubini theorem
$$
I_2(t) = \frac{1}{t}\int_{\RR^d} \int_{\RR^d}\int_{\RR^d}\int_0^t  \tp(t-s,x,z) p(s,z,y)  b(z)\cdot \nabla_y f(y) g(x) \,dz\,ds\,dy\,dx\,.
$$
If we denote by $p^{(b)}$ the function $p$ perturbed by $b$ then $\tp(t,x,y)=p^{(b)}(t,x,y) = p^{(-b)}(t,y,x)$. Hence $\int_{\RR^d} \tp(t,x,y)dx=1$ for $t>0$ and $y\in\RR^d$. Therefore by (\ref{eq:tp_est})
\begin{align*}
\left|I_2(t) - \int_{\RR^d} b(z)\cdot\nabla f(z)\, g(z)dz \right| \le & c\int_{\RR^d} \int_{\RR^d}\int_{\RR^d}\int_0^t \frac{p(t-s,x,z) p(s,z,y)}{t} | b(z)| \times\\
& \times |\nabla_y f(y) g(x)- \nabla_z f(z) g(z)| \,dz\,ds\,dy\,dx,
\end{align*}
and the last expression converges to $0$ as $t \to 0$ (for details see the proof of \cite[Theorem1]{2007-KB-TJ-cmp}).
\end{proof}
\section*{Acknowledgment} 
I would like to thank Krzysztof Bogdan, Grzegorz Karch, Jacek Zienkiewicz and Wojciech Mydlarczyk for many helpful
comments on this paper.

\bibliographystyle{abbrv} 
\bibliography{zgk}
\end{document}